\newif\ifprivate
\def\???{\ifprivate {\bf {???}} \marginpar{{\Huge {\bf ?}}}
\else \fi}
\def\???{{\bf {???}} \marginpar{{\Huge {\bf ?}}} }
\def\thetime{\timehour=\time
\divide\timehour by60 \minleft=\timehour \multiply\minleft by -60
\advance\minleft by\time \ifnum\time>720\advance\timehour
by-12\fi\relax
\number\timehour:\ifnum\minleft<10 %
    0\fi\relax\number\minleft
    \ifnum\time>720~pm \else~am\fi}
\newtheorem{defn0}{Definition}[section]
\newtheorem{prop0}[defn0]{Proposition}
\newtheorem{thm0}[defn0]{Theorem}
\newtheorem{lemma0}[defn0]{Lemma}
\newtheorem{corollary0}[defn0]{Corollary}
\newtheorem{example0}[defn0]{Example}
\newtheorem{remark0}[defn0]{Remark}
\newtheorem{conjecture0}[defn0]{Conjecture}
\newenvironment{definition}{ \begin{defn0}}{\end{defn0}}
\newenvironment{proposition}{\bigskip \begin{prop0}}{\end{prop0}}
\newenvironment{theorem}{\bigskip \begin{thm0}}{\end{thm0}}
\newenvironment{lemma}{\bigskip \begin{lemma0}}{\end{lemma0}}
\newenvironment{corollary}{\bigskip \begin{corollary0}}{\end{corollary0}}
\newenvironment{remark}{ \begin{remark0}\rm}{\end{remark0}}
\newcommand{\propref}[1]{Proposition~\ref{#1}}
\newcommand{\thmref}[1]{Theorem~\ref{#1}}
\newcommand{\lemref}[1]{Lemma~\ref{#1}}
\newcommand{\corref}[1]{Corollary~\ref{#1}}
\def\maxi{{\mathfrak m}}                   
\def\res{{\bf k}}                   
\def\HiRIM#1{H^i_{\mathcal M}(#1)}
\def\LC#1#2#3{H^{#1}_{#2}(#3)}
\def\Proj{\;{{\bf Proj}\,}}
\def\Spec{\;{{\bf Spec}\,}}
\def\av{\underline{a}}
\def\bv{\underline{b}}
\def\cv{\underline{c}}
\def\dv{\underline{d}}
\def\nv{\underline{n}}
\def\ov{\underline{0}}
\def\rv{\underline{r}}
\def\tv{\underline{t}}
\def\kv{\underline{k}}
\def\betav{\underline{\beta}}
\def\etav{\underline{\eta}}
\def\alphav{\underline{\alpha}}
\def\deltav{\underline{\delta}}
\def\varepsilonv{\underline{\varepsilon}}
\def\Zr{\mathbb Z^r} \def\Nr{\mathbb N^r}
\def\MM{\mathcal{M}}
\newcommand{\depth}{\operatorname{depth}}
\newcommand{\gdepth}{\operatorname{gdepth}}
\newcommand{\fg}{\operatorname{fg}}
\newcommand{\Gfg}{\operatorname{\Gamma-fg}}
\newcommand{\pcmd}{\operatorname{pcmd}}
\newcommand{\h}{\operatorname{ht}}
\begin{document}

\title[Non-standard multigraded modules]{{\bf
Cohomological properties of non-standard \\ \quad \\ multigraded modules}}
\author[Gemma Colom\'e-Nin]{Gemma Colom\'e-Nin}
\author[Juan Elias]{Juan Elias}
\thanks{Partially supported by MEC-FEDER MTM2007-67493. \\
\rm \indent 2000 MSC:  13A02, 13A30,
13C15, 13D45}
\address{Departament d'\`{A}lgebra i Geometria
\newline \indent Facultat de Matem\`{a}tiques
\newline \indent Universitat de Barcelona
\newline \indent Gran Via 585, 08007
Barcelona, Spain}
\email{\tt gcolome@ub.edu, elias@ub.edu}
\date{December 13, 2007}

\begin{abstract}
In this paper we study some cohomological properties of non-standard
multigraded modules and Veronese transforms of them. Among others
numerical characters, we study the generalized depth of a module and
we see that it is invariant by taking a Veronese transform. We prove
some vanishing theorems for the local cohomology modules of a
multigraded module; as a corollary  of these results we get that the
depth of a Veronese module is asymptotically constant.
\end{abstract}

\maketitle

\baselineskip 16pt


\section*{Introduction}

In commutative algebra, graded modules are object of study for many authors as
well as standard multigraded ones. For graded modules it has been studied
also the non-standard case, however then non-standard multigraded study is not so common. A
general reference on the subject could be \cite{GWII78}.

Along this paper $S$ is a non-standard $\mathbb N^r$-graded
$S_{\ov}$-algebra finitely generated by elements of multidegrees
$\gamma_i=(\gamma_1^i,\dots,\gamma_i^i,0\dots,0)\in\Nr$, with $\gamma_i^i\neq
0$, for $i=1,\dots,r$. For some of the results in the second part of the paper,
we need to restrict our setting to the almost-standard case, which is with positive
multiples of the canonical basis of $\mathbb R^r$ as a multidegrees of the
generators.

The main purpose of this paper is to study some cohomological properties of
multigraded $S$-modules and, in particular, of the Veronese modules associated to
a non-standard multigraded $S$-module $M$. We mainly study the vanishing of the
local cohomology modules of $M$ and of Veronese modules of $M$, generalizing
some results on the depth of Veronese modules associated to Rees algebras
proved in \cite{Eli04}.

In the section 1 we extend several results on homogeneous ideals of $\mathbb Z$-graded rings to
homogeneous ideals of non-standard $\mathbb Z^r$-graded rings, \propref{nongr}.
 By considering the multigraded
scheme ${\bf Proj}^r(S)$ we define the projective Cohen-Macaulay deviation of a
multigraded modules and we link this number with the generalized depth, studied
by Brodmann and Faltings (see \cite{Bro83} and \cite{Fal78}),
\thmref{threetenors1}. As a corollary we prove that the generalized depth
remains invariant by taking Veronese modules, \propref{fgver}.

In the first part of section two we prove, under the general  hypothesis on the
degrees of $S$, that the depth of the Veronese modules $M^{(\bv)}$ is constant
for special asymptotic values of $\bv$, \propref{gg}. In the second part of the
section we extend to a non-standard framework the notion of finitely
generation, \cite{Mar95}. Under some special degrees of $S$ we prove that the
generalized depth of a multigraded module coincides with its  finitely
graduation order, \thmref{threetenors}. We use it to get that the depth of the
Veronese  modules $M^{(\av,\bv)}$ is constant for large $\av,\bv\in \mathbb
N^r$, \thmref{main}, and we apply this result to the multigraded Rees algebras
associated to a finite set of ideals, \propref{depthmultirees}.

See \cite{hyr99} and its reference list for more results on the
Cohen-Macaulay and Gorenstein property of the multigraded Rees
algebras.

\bigskip
\noindent
{\sc Notations.}
Along the paper we use the underline to denote a multi-index: $\av
=(a_1,\cdots,a_r)\in \mathbb Z^r$. We write $|\av|=\sum_{i=1}^r  |
a_i|$. Given $\av, \bv \in \mathbb Z^r$, $\av  . \bv$ is the
termwise product of $\av$ and $\bv$, and $\av\ge\bv$ if, and only
if, $a_i\ge b_i$ for all $i=1,\dots,r$.
For all $\lambda\in \mathbb Z$ we put $\underline\lambda =(\lambda,\cdots, \lambda)\in \mathbb Z^r$.

Given integral vectors $\gamma_i=(\gamma_1^i,\dots,\gamma_i^i,0,\dots, 0)\in
\mathbb N^r$, $i=1,\cdots , r$,  such that $\gamma_i^i\neq 0 $,
we denote by
$\phi$ the map
$$
\begin{array}{cccc}
  \phi: & \mathbb Z^r & \longrightarrow & \mathbb Z^r \\
        & \nv         & \longmapsto     & \sum_{i=1}^r n_i \gamma_i
\end{array}
$$
notice that $Im(\phi)=\Gamma(\gamma_1,\cdots, \gamma_r)$ is  the
subgroup of $\mathbb Z^r$ generated by $\gamma_i$, $i=1,\cdots ,
r$.

We will denote by
 $G$ the $r\times r$ triangular  matrix whose columns are the vectors
$\gamma_1,\dots,\gamma_r$. Notice that $G$  is a non-singular matrix and that
the multi-index $t_1\gamma_1+\cdots + t_r \gamma_r$ is the  column vector
$G \tv$.

Given $\av\in \mathbb N^{*r}$ we denote by $\phi_{\av}$ the map
$$
\begin{array}{cccc}
  \phi_{\av}: & \mathbb Z^r & \longrightarrow & \mathbb Z^r \\
        & \nv         & \longmapsto     & \phi_{\av}(\nv)=\phi(\nv . \av ),
\end{array}
$$
with  $\phi_{\av}(\nv)=\phi(\nv . \av )=\sum_{i=1}^r (n_i a_i)
\gamma_i$ for all $\nv\in \mathbb Z^r$.

\bigskip

Let $S=\bigoplus_{\nv \in \mathbb N^r}S_{\nv} $ be a Noetherian
$\mathbb N^r$-graded ring  generated as $S_{\ov}$-algebra by
homogeneous elements $g_i^j$, $j=1,\cdots,\mu_i$, of multidegree
$\gamma_i$ for $i=1,\cdots, r$; the number of generators of $S$ is
$\mu=\mu_1+\cdots+\mu_r$. Notice that $S=\oplus_{\nv \in \Gamma}
S_{\nv}$, with $\Gamma=\Gamma(\gamma_1,\cdots, \gamma_r)$. We assume
that $S_{\underline{0}}$ is a local ring with maximal ideal $\maxi$
and infinite residue field.
\bigskip

For $i=1,\dots,r$, let $I_i$ be the ideal of $S$ generated by the
homogeneous components of $S$ of multidegrees
$(d_1,\dots,d_i,0,\dots,0)$ with $d_i\ne 0$. We define the
irrelevant ideal of $S$ as $S_{++}=I_1 \cdots I_r$. As usual we
write $S_+=\oplus_{\nv\neq\ov}S_{\nv} \supset S_{++}$.
Notice that in the graded case, i.e. $r=1$, these two ideals are the same $S_{+}=S_{++}$.

\bigskip
The Veronese transform of $S$ with respect to $\av\in\mathbb{N}^{*r}$, or
$(\av)$-Veronese, is the ring
$$
S^{(\av)}=\bigoplus_{\nv \in \mathbb{N}^r} S_{\phi_{\av}(\nv)}.
$$
This is a subring of $S$. The degrees of the generators  of
$S^{(\av)}$ have the same triangular configuration as the degrees of
$S$.

Given an $S$-graded module $M$ we denote by $M^{(\av,\bv)}$ the Veronese
transform of $M$ with respect to $\av,\bv\in\mathbb{N}^{*r}$, or
$(\av,\bv)$-Veronese,
$$
M^{(\av,\bv)}=\bigoplus_{\nv \in \mathbb Z^r}
M_{\phi_{\av}(\nv)+\bv}.
$$
This is an $S^{(\av)}$-module. Notice that in the case of
$\bv=(0,\dots,0)$ we get the classical definition of Veronese of a
module.

\bigskip

Let $M$ be a finitely generated $S$-module. By using a similar argument as in
\cite{HHR93}, Lemma 1.13 and Lemma 1.14, see also \cite{GWII78}, we can prove
that the
local cohomology functor and the Veronese functor commute
$$
\LC{*}{\mathcal M^{(\av)}}{M^{(\av,\bv)}} \cong (\LC{*}{\mathcal
M}{M})^{(\av,\bv)}
$$
where $\mathcal M$ is the maximal homogeneous  ideal of $S$, i.e.
$\MM=\mathfrak{m}\oplus S_{+}$, and $\av,\bv \in \mathbb N^{*r}$.
For the basic properties of local cohomology we use \cite{BSLC}
 as general reference.


\bigskip
\section{Generalized depth and Veronese modules.}

In this section, we study, in our multigraded setting, some properties of a
multigraded module and the Veronese transform of a module. These properties
allow to us to study the generalized depth of a multigraded module and its
Veronese.
\bigskip

Let $\Proj^r(S)$ be the set of all relevant homogeneous prime ideals on $S$,
which is the set of all homogeneous prime ideals $p$ of $S$ such that $p
\not\supset S_{++}$. Notice that $p \not\supset S_{++}$ if and only if for each
$1\le i\le r$ there exists  $1\le j(i)\le \mu_i$ such that $g_i^{j(i)}\notin
p$. Given an homogeneous ideal $p\subset S$ we denote by $U$ the multiplicative
closed subset of $S$ formed by the homogeneous elements of $S\setminus p$;
 we denote by  $S_{(p)}$ the set of fractions $m/s\in U^{-1} S$
such that $\deg(m)=\deg(s)\in \mathbb N^r$;
$S_{(p)}$ is a local ring with maximal ideal $p \; U^{-1} S \cap  S_{(p)}$.

\medskip
In the next proposition we prove several
results relating properties of non-standard
$\mathbb Z^r$-graded rings and modules with their Veronese transforms.

\medskip
\begin{proposition}
\label{nongr} $(i)$
 For all $p\in \Proj^r (S)$ the ring extension
$$
S_{(p)} \longrightarrow S_{p}
$$
is faithfully flat with closed  fiber $\res(p)$.

\noindent $(ii)$ For all $\av \in \mathbb N^{*r}$, the extension
$S^{(\av)} \hookrightarrow S$ is integral, $\dim(S^{(\av)})=\dim(S)$
and there is an homeomorphism of topological spaces
$$
\Proj^r(S^{(\av)})\cong \Proj^r(S).
$$
For all $p\in \Proj^r (S)$ it holds $\h(p^{(\av)})=\h(p)$.

\medskip
\noindent
$(iii)$ Let $M$ be a finitely generated $S$-module.
For all $p\in \Proj^r (S)$
and $\bv\in\Nr$, it holds $M^{(\av,\bv)}_{(p^{(\av)})}=M(\bv)_{(p)}.$
\end{proposition}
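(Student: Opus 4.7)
All three parts rely on the following structural fact. For $p\in\Proj^r(S)$, the hypothesis $p\not\supset S_{++}$ provides, for each $i$, a generator $s_i:=g_i^{j(i)}\in S_{\gamma_i}\setminus p$. Since $G$ is non-singular, $\gamma_1,\dots,\gamma_r$ are $\mathbb Q$-linearly independent, so every homogeneous element of $T:=S[s_1^{-1},\dots,s_r^{-1}]$ of multidegree $G\tv$ is uniquely of the form $u\cdot s_1^{t_1}\cdots s_r^{t_r}$ with $u\in T_{\ov}$. Consequently $T\cong T_{\ov}[z_1^{\pm 1},\dots,z_r^{\pm 1}]$ as $T_{\ov}$-algebras, and $T_{\ov}\hookrightarrow T$ is faithfully flat. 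I would use this Laurent presentation throughout.

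\textbf{Part (i).} The ring $S_{(p)}$ is a localization of $T_{\ov}$ at the contracted prime $(pT)\cap T_{\ov}$, while $S_p$ is a localization of $T$ at $pT$; these two localizations are compatible with the inclusion $T_{\ov}\hookrightarrow T$. Base-changing the faithfully flat map $T_{\ov}\hookrightarrow T$ along $T_{\ov}\to S_{(p)}$ then gives faithful flatness of $S_{(p)}\to S_p$. The closed fiber $S_p\otimes_{S_{(p)}}\res(p)$ is identified via the Laurent presentation with the degree-zero part of $\mathrm{Frac}(S/p)$, which is precisely $\res(p)$.

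\textbf{Part (ii).} For $x\in S_{G\tv}$ and $N:=a_1\cdots a_r$, the multidegree $N G\tv$ lies in $\phi_{\av}(\mathbb Z^r)$ (take $n_i:=Nt_i/a_i$), hence $x^N\in S^{(\av)}$. Thus $S$ is module-finite over $S^{(\av)}$, which immediately gives $\dim S=\dim S^{(\av)}$. The contraction $p\mapsto p^{(\av)}:=p\cap S^{(\av)}$ lands in $\Proj^r(S^{(\av)})$ since $s_i^{a_i}\in S^{(\av)}\setminus p^{(\av)}$; bijectivity follows from lying-over and incomparability, with the lifted prime being relevant because each $s_i^{a_i}$ that witnesses relevance of $p^{(\av)}$ also forces $s_i\notin p$. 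The homeomorphism then follows from the matching of basic opens $D_+(f)\leftrightarrow D_+(f^N)$. The main obstacle is height preservation: incomparability yields $\h(p)\le\h(p^{(\av)})$ immediately, but the reverse requires lifting chains from $S^{(\av)}$ to $S$. I would handle it through the Laurent presentation—inverting $s_1\cdots s_r$ exhibits $S$ and $S^{(\av)}$ as integral extensions of a common Laurent polynomial ring (differing essentially by the substitution $z_i\mapsto w_i^{a_i}$), so $S_p$ and $S^{(\av)}_{p^{(\av)}}$ have the same Krull dimension.

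\textbf{Part (iii).} This is a direct comparison of the defining sets. An element of $M(\bv)_{(p)}$ is a fraction $m/s$ with $s\in S$ homogeneous, $s\notin p$, and $\deg m-\deg s=\bv$; an element of $M^{(\av,\bv)}_{(p^{(\av)})}$ has the same shape but with the extra constraint that $s\in S^{(\av)}$. Given $m/s$ of the first type, multiplying numerator and denominator by a monomial $s_1^{c_1}\cdots s_r^{c_r}$, with $c_i$ chosen so that $\deg s+\sum_i c_i\gamma_i\in\phi_{\av}(\mathbb Z^r)$, rewrites $m/s$ as an element of $M^{(\av,\bv)}_{(p^{(\av)})}$. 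The reverse inclusion is immediate.
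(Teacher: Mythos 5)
Your proof follows the same backbone as the paper's: the Laurent-polynomial structure coming from the relevance of $p$ (the chosen generators $s_i=g_i^{j(i)}\notin p$), the integrality via $x^N\in S^{(\av)}$ with $N=a_1\cdots a_r$, and an adjustment-by-monomials trick for (iii). Your packaging is a bit cleaner and more structural: you isolate $T=S[s_1^{-1},\dots,s_r^{-1}]\cong T_{\ov}[z_1^{\pm 1},\dots,z_r^{\pm 1}]$ once and derive both (i) and the height equality in (ii) from it, whereas the paper constructs the isomorphism $\varphi\colon S_{(p)}[T_1^{\pm 1},\dots,T_r^{\pm 1}]\to U^{-1}S$ directly and is quite terse about why the homeomorphism preserves heights; your finite-free Laurent-subring argument supplies exactly the missing justification. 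Your (iii) is also genuinely simpler: multiplying $m/s$ by a monomial in the fixed $s_i$'s so that the degree lands in $\phi_{\av}(\mathbb N^r)$ works directly, and avoids the paper's detour through decomposing $s$ into monomials and selecting a monomial summand outside $p$. One local slip in (ii): you attribute injectivity of $p\mapsto p^{(\av)}$ to ``lying-over and incomparability,'' but incomparability only says the fiber over a prime of $S^{(\av)}$ is an antichain, not a singleton. Injectivity actually follows from the radical property you have already established: if $p_1,p_2$ contract to the same prime, then $x\in p_1$ gives $x^N\in p_1^{(\av)}=p_2^{(\av)}\subset p_2$, so $x\in p_2$, and by symmetry $p_1=p_2$ — this is precisely the paper's argument. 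With that correction the proof is sound.
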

\begin{proof}
$(i)$
Since $p \not\supset S_{++}$,
 for each $i\in \{1,\cdots , r\}$ there exists a generator
$g_i^{j(i)}\notin p$, $1\le j(i) \le \mu_i$.
In particular  $g_i^{j(i)}\in U$ for all
$i=1,\cdots,r$.

 Let us consider the ring map
$$
\varphi: S_{(p)}[T_1,T_1^{-1},\cdots,T_r,T_r^{-1}]\longrightarrow U^{-1} S
$$

\noindent
defined by $\varphi(T_i)=g_i^{j(i)}$ and $\varphi(T_i^{-1})=(g_i^{j(i)})^{-1}$,
$i=1,\cdots, r$. We will prove that $\varphi$ is a ring isomorphism.

Let $m/s$ be a fraction of $U^{-1} S$; let $D=\sum_{i=1}^r D_i \gamma_i$, $D_i\in \mathbb N$,
 be the degree of $m$, and let $d=\sum_{i=1}^r d_i \gamma_i$, $d_i\in \mathbb N$, be the degree of $s$.
We define
$$
t=\prod_{i=1}^r (g_i^{j(i)})^{d_i-D_i}
$$
Hence, let us consider the identity
$$
\frac{m}{s}=\left(\frac{m}{s} t \right) t^{-1}.
$$
Notice that $\frac{m}{s} t\in S_{(p)}$ and that $t^{-1}=\varphi(\prod_{i=1}^r
T_i^{D_i-d_i})$, so $\varphi$ is an epimorphism.

Let $z=\sum_{\nv\in\mathbb Z^r} c_{\nv} T^{\nv}$ be an element of the ring
$S_{(p)}[T_1,T_1^{-1},\cdots,T_r,T_r^{-1}]$ such that $\varphi(z)=\sum_{\nv\in\mathbb Z^r}
c_{\nv}\prod_{i=1}^{r}(g_i^{j(i)})^{n_i}=0$, $\nv=(n_1,\dots,n_r)$.
Since $c_{\nv}\in S_{(p)}$, we can write $c_{\nv}=a_{\nv}/b_{\nv}$ with
$\deg(a_{\nv})=\deg(b_{\nv})$, $a_{\nv}\in S$ and $b_{\nv}\notin p$.
We write
 $$\prod_{i=1}^{r}
(g_i^{j(i)})^{n_i}=\frac{(g_{i_1}^{j(i_1)})^{n_{i_1}}\dots(g_{i_s}^{j(i_s)})^{n_{i_s}}}{(g_{j_1}^{j(j_1)})^{-n_{j_1}}
\dots(g_{j_t}^{j(j_t)})^{-n_{j_t}}}
$$
with  $(g_{i_1}^{j(i_1)})^{n_{i_1}}\dots(g_{i_s}^{j(i_s)})^{n_{i_s}}\in S$ and
$(g_{j_1}^{j(j_1)})^{-n_{j_1}}\dots(g_{j_t}^{j(j_t)})^{-n_{j_t}}\in S\setminus
p$,
i.e. $n_{i_1},\dots,n_{i_s}\ge 0$ and $n_{j_1},\dots,n_{j_t}<0$.

Now,
$$
\varphi(z)=\sum_{\nv\in\mathbb Z^r}\frac{a_{\nv}(g_{i_1}^{j(i_1)})^{n_{i_1}}
\dots(g_{i_s}^{j(i_s)})^{n_{i_s}}}{b_{\nv}(g_{j_1}^{j(j_1)})^{-n_{j_1}}\dots(g_{j_t}^{j(j_t)})^{-n_{j_t}}}=0
$$
and by reducing to a common denominator we get
$$
\varphi(z)=\sum_{\nv}\frac{d_{\nv}}{b}(g_{i_1}^{j(i_1)})^{n_{i_1}}\dots(g_{i_s}^{j(i_s)})^{n_{i_s}}=0
$$
Now,
$\deg(b)=\deg(d_{\nv})+\sum_{k=1}^{t}-n_{j_k}\gamma_{j_k}$.

Hence there exist $\delta \in U$ such that,
$$
\sum_{\nv\in\mathbb Z^r}\delta
d_{\nv}(g_{i_1}^{j(i_1)})^{n_{i_1}}\dots(g_{i_s}^{j(i_s)})^{n_{i_s}}=0.
$$
We have that if
$A_{\nv}=\delta
d_{\nv}(g_{i_1}^{j(i_1)})^{n_{i_1}}\dots(g_{i_s}^{j(i_s)})^{n_{i_s}}\neq 0$,
then
$$
\deg(A_{\nv})=\deg(\delta)+\deg(d_{\nv})+\sum_{k=1}^{s}n_{i_k}\gamma_{i_k}=
\deg(\delta)+\deg(b)+\sum_{i=1}^r n_i\gamma_i.
$$

Since  the $r\times r$ matrix $(\gamma_1,\cdots,\gamma_r)$ is upper triangular
and non-singular, the  degrees $\deg(A_{\nv})$ are different when $\nv$ ranges
$\mathbb Z^r$.
Hence  we get $A_{\nv}=0$ for all $\nv\in \mathbb Z^r$.

Let us consider the following identities in $S_{(p)}$
$$
 c_{\nv}=\frac{a_{\nv}}{b_{\nv}}=\frac{d_{\nv}(g_{j_1}^{j(j_1)})^{-n_{j_1}}\dots(g_{j_t}^{j(j_t)})^{-n_{j_t}}}{b}=
\frac{A_{\nv}(g_{j_1}^{j(j_1)})^{-n_{j_1}}\dots(g_{j_t}^{j(j_t)})^{-n_{j_t}}}{b\delta(g_{i_1}^{j(i_1)})^{n_{i_1}}
\dots(g_{i_s}^{j(i_s)})^{n_{i_s}}}
=0,
$$
so $z=0$, and hence $\varphi$ is a monomorphism.

Let us consider the multiplicative closed subset
$W=U^{-1}S\setminus p U^{-1}S$ .
Then $S_p= W^{-1}[U^{-1}S]$, furthermore
$$
S_p= W^{-1}(S_{(p)}[T_1,T_1^{-1},\cdots,T_r,T_r^{-1}]).
$$
From this identity  we deduce that the ring extension
$S_{(p)} \longrightarrow S_{p}$
is faithfully flat.
A simply computation shows that the closed fiber of $S_{(p)} \longrightarrow S_{p}$
is $\res(p)$.

\medskip
\noindent
$(ii)$
First we prove that the ring extension
$$
S^{(\av)} \hookrightarrow S
$$
is integral. Let $x\in S$ be an element of degree $\nv\in \mathbb
Z^r$. We write $\nv=\sum_{i=1}^r b^i \gamma_i$, $a=a_1\cdots a_r$,
and $\rv=( \frac{a b^i}{a_i}; i=1,\cdots, r)$. Then it is easy to se
that $a \nv =\phi_{\av}(\rv)$, so $x^a\in
S_{\phi_{\av}(\rv)}=(S^{(\av)})_{\rv}$. Hence $x$ is a zero of
$f(T)=T^a- x^a\in S^{(\av)}[T]$. Therefore $S$ is integral over
$S^{(\av)}$ and then $\dim(S^{(\av)})=\dim(S)$.

Notice that  $p \not\supset S_{++}$ if and only if
$p^{(\av)}=p \cap S^{(\av)} \not\supset S^{(\av)}_{++}$,
so we can  define a continuous map
$$
\begin{array}{cccc}
  \psi :&  \Proj^r(S)& \longrightarrow & \Proj^r(S^{(\av)}) \\
        &     p &   \mapsto & p^{(\av)} \\
\end{array}
$$
this map is surjective and closed since the the extension
$S^{(\av)} \hookrightarrow S$
is integral.

The map $\psi$ is injective: let $p_1, p_2 \in \Proj^r(S)$ such that
$p_1^{(\av)}= p_2^{(\av)}$. Given $x\in p_1$, by the argument done
in $(ii)$ we have
$$
x^a \in p_1\cap S^{(\av)}=p_1^{(\av)}= p_2^{(\av)}\subset p_2,
$$
so $x\in p_2$, i.e. $p_1\subset p_2$. By the symmetry of the problem
we have $p_1=p_2$. Hence $\psi$ is an homeomorphism of topological
spaces.

 The identity $\h(p^{(\av)})=\h(p)$ follows from the
above homeomorphism.

\medskip
\noindent $(iii)$
Notice that we always have
$$
M^{(\av,\bv)}_{(p^{(\av)})}\subset M(\bv)_{(p)}.
$$
In fact, let  $m/s$ be an element of $ M^{(\av,\bv)}_{(p^{(\av)})}$,
it means that $m\in (M^{(\av,\bv)})_{\nv}=M_{\phi_{\av}(\nv)+\bv}$
and $s\in (S^{(\av)})_{\nv}=S_{\phi_{\av}(\nv)}$ but $s\notin
p^{(\av)}$, $\nv\in\Zr$. Since  $s\in S^{(\av)}\setminus p^{(\av)}$
we have that $s\notin p$, so  $m/s\in M(\bv)_{(p)}$.

Let $m/s\in M(\bv)_{(p)}$ be a fraction such that $\deg(m)-\bv=\deg(s)=\nv\in
\mathbb N^r$ and $s\notin p$.
Since $s$ is an homogeneous element of degree
$\nv$, we can decompose $s$ in a sum of monomials on the generators $g_{i}^{j}$ of $S$:
$s=s_1+\dots+s_t$, with $\deg(s_i)=\nv$ for all
$i=1,\dots,t$.
Since $s\in S\setminus p$, there exist $k\in\{1,\dots t\}$ such that
$s_k\notin p$.
If we write
$$
s_k=\prod_{i=1}^r \prod_{j=1}^{\mu_i} (g_i^j)^{d_i^j},
$$
$d_i^j\in \mathbb N$, so
$$
\deg(s_k)=\nv=\sum_{i=1}^r\left( \sum_{j=1}^{\mu_i}d_i^j\right)\gamma_i.
$$
Since $s_k\notin p$, for each coefficient
$\sigma_i=\sum_{j=1}^{\mu_i}d_i^j\neq
0$ there exist a generator $g_i^{j(i)}\notin p$.
Let  $\sigma_{i_l}$,
$l\in\{1,\cdots, e\}$, be such a non-zero coefficients.
For each $l=1,\cdots, e$, let
$c_{i_l}\in \mathbb N\setminus\{0\}$ and $f_{i_l}\in \mathbb N\setminus\{0\}$
be non-negative integers such that
$$
\sigma_{i_l} + c_{i_l}= f_{i_l} a_{i_l}.
$$
We put $c_i=f_i=0$ for all $i\notin \{ i_1,\cdots, i_e\}$. We define
$$
z=\prod_{l=1}^{e} (g_{i_l}^{j(i_l)})^{c_{i_l}}\notin p.
$$
Since $s\notin p$ is homogeneous, $zs\notin p$ is still homogeneous and then
$$
\deg(zm)-\bv=\deg(zs)=\deg(zs_k)=\sum_{i=1}^r f_{i}
a_{i}\gamma_i=\phi_{\av}((f_1,\dots,f_r)),
$$
so $m/s=(zm)/(zs)\in
M^{(\av,\bv)}_{(p^{(\av)})}$.

\end{proof}

Given an ideal $p\in \Spec(S)$ we denote by $p^*$ the  prime ideal generated by
the homogeneous elements belonging to $p$, see \cite{GWII78} section 2. We can
relate the depths of the localization on a prime $p$ with the localization on
$p^*$.

\begin{proposition}\label{depth+ht}
Let us assume that $S$ is a catenary ring.
Let $M$ be a finitely generated $\Zr$-graded $S$-module.
Given an ideal  $p\in \Spec(S)$  such that $p \not\supset S_{++}$ and $M_p\neq 0$, then
it holds
$$
\depth(M_p)+ \dim(S/p)=\depth(M_{(p^*)})+ \dim(S/p^*)
$$

\end{proposition}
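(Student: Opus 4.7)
The plan is to compare both sides of the identity via the depth of $M_{p^*}$, combining \propref{nongr}(i) on the depth side with the catenary hypothesis on the dimension side.

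First I would check that $p^*\in\Proj^r(S)$: since $p\not\supset S_{++}$ there exist homogeneous generators $g_i^{j(i)}$ of $S$ lying outside $p$, and being homogeneous they also lie outside $p^*$, so $p^*\not\supset S_{++}$. Applying \propref{nongr}(i) to $p^*$ then yields that $S_{(p^*)}\hookrightarrow S_{p^*}$ is a faithfully flat extension of local rings with closed fiber the residue field $\res(p^*)$. Because this fiber has depth zero, the standard base-change formula for depth along a faithfully flat local ring homomorphism gives
$$
\depth(M_{p^*})=\depth(M_{(p^*)}).
$$

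Next I would invoke the $\Zr$-graded analogue of the depth-shift formula relating $M_p$ and $M_{p^*}$,
$$
\depth(M_p)=\depth(M_{p^*})+\h(p/p^*),
$$
obtained by flat base change along the localization $S_{p^*}\hookrightarrow S_p$ whose fiber $S_p/p^*S_p$ is the localization of the $\Zr$-graded domain $S/p^*$ at the non-homogeneous prime $p/p^*$ (see \cite{GWII78}; cf.\ the $\mathbb Z$-graded version in Bruns--Herzog). Finally, catenarity of $S$ applied to the chain $p^*\subseteq p$ gives
$$
\dim(S/p^*)=\dim(S/p)+\h(p/p^*).
$$
Adding this identity to $\depth(M_p)$ and substituting via the previous two equalities yields the desired formula.

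The main obstacle is justifying the depth-shift formula in the $\Zr$-graded setting. Its $\mathbb Z$-graded form is classical (and there $\h(p/p^*)\in\{0,1\}$), but for $\Zr$-gradings $\h(p/p^*)$ can exceed $1$; I would handle this by iterating the $\mathbb Z$-graded case along a chain of intermediate star-closures produced by coarsening the $\Zr$-grading one coordinate at a time, checking at each stage that the faithful flatness from \propref{nongr}(i) and the catenarity pass through the coarsening unchanged.
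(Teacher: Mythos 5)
Your argument is correct and follows essentially the same route as the paper: both proofs reduce the left-hand side to $\depth(M_{p^*})+\dim(S/p^*)$ using the Goto--Watanabe depth/dimension formulas for $\Zr$-graded localizations and catenarity, and then pass from $M_{p^*}$ to $M_{(p^*)}$ via the faithfully flat map $S_{(p^*)}\hookrightarrow S_{p^*}$ of \propref{nongr}(i) (incidentally, the paper's text writes $S_{(p)}\to S_p$ at this step, which is a slip since $p$ need not be homogeneous; your version with $p^*$ is the one that makes sense). The only cosmetic difference is in the dimension bookkeeping: the paper routes through $\dim(S)$ via $\dim(S/p)=\dim(S)-\dim(S_p)$ and $\dim(S_p)=\dim(S_{p^*})+d$ with $d=\dim(S_p/p^*S_p)$, whereas you use $\dim(S/p^*)=\dim(S/p)+\h(p/p^*)$ directly; these are equivalent since $d=\h(p/p^*)$.

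The one place I would push back is your final paragraph. You treat the $\Zr$-graded depth-shift formula $\depth(M_p)=\depth(M_{p^*})+\depth(S_p/p^*S_p)$ (with the fiber regular, hence $\depth=\dim=\h(p/p^*)$) as an obstacle requiring an iterated-coarsening argument. No such workaround is needed: \cite{GWII78} is precisely a paper about $\Zr$-graded rings, and Proposition~1.2.2 and Corollary~1.2.4 there give the depth and dimension shift formulas in exactly this multigraded generality; this is what the paper cites. Invoking that reference directly, as you already do parenthetically, closes the gap without any coordinate-by-coordinate reduction, and avoids the extra burden of checking that flatness and catenarity are preserved under coarsening.
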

\begin{proof}
We put $d=\dim(S_p/p^* S_p)$. From \cite{GWII78}, Proposition 1.2.2 and
Corollary 1.2.4, we have that $\depth(M_p)=\depth(M_{p^*})+ d$ and
$\dim(M_p)=\dim(M_{p^*})+ d$. On the other hand, since $S$ is catenary we have
$\dim(S_{p})=\dim(S)-\dim(S/p)$ and $\dim(S_{p^*})=\dim(S)-\dim(S/p^{*})$. From
these identities we get
$$
\begin{array}{lll}
\depth(M_p)+ \dim(S/p)& = &\depth(M_{p^*})+ d+ \dim(S)-\dim(S_p) \\ \\
  & = &\depth(M_{p^*})+ d+ \dim(S)-\dim(S_{p^*})-d \\ \\
  & = &\depth(M_{p^*})+ \dim(S/p^*).
\end{array}
$$

Since the morphism $S_{(p)}\longrightarrow S_p$ is faithfully flat with closed
fiber $\res(p)$ we get, \cite{MatCR} Theorem 23.3, that
$\depth(M_{p^*})=\depth(M_{(p^*)})$. From this we get the claim.
\end{proof}

\bigskip

Let $M$ be a $S$-graded module. We denote by $\pcmd(M)$ is the
\textit{projective Cohen-Macaulay deviation} of $M$, i.e.
the maximum of
$$
\dim(S_{(p)})- \depth(M_{(p)})
$$
where $p\in \Proj^r(S)$, see \cite{Eli04}.

We denote by $\gdepth(M)$ the so-called \textit{generalized depth} of $M$ with
respect to the homogeneous maximal  ideal $\mathcal M$ of $S$,
 $\gdepth(M)$ is the greatest integer $k \ge 0$ such that
$$
S_{++} \subset rad(Ann_{S}(\HiRIM{M}))
$$
 for all  $i < k$, see \cite{HM94b}.
 Notice that $\gdepth(M)\ge \depth(M)$.

\bigskip
In the case of being $S_{\ov}$ a quotient of a regular ring, we can relate
these last two integers. This relation is crucial in order to prove that the
generalized depth of a module coincides with the one of its Veronese transform.
Next theorem generalizes Proposition 2.2. in \cite{HM99}.

\begin{theorem}
\label{threetenors1} Let $M$ be a finitely generated $S$-graded
module.
If $S_{\ov}$ is the quotient of a regular ring then
$$
\gdepth(M)= \dim(S)- \pcmd(M).
$$
\end{theorem}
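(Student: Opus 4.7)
The plan is to apply Faltings' Annihilator Theorem to reformulate $\gdepth(M)$ as an infimum of local depths over $\Spec(S)$, and then to use \propref{depth+ht} together with \propref{nongr}(i) to push that infimum onto $\Proj^r(S)$, thereby obtaining $\pcmd(M)$. This mirrors the approach of \cite{HM99} in the standard graded setting, with the multigraded machinery of the previous propositions replacing its $\mathbb Z$-graded counterpart.

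First I would verify the Faltings hypothesis. Since $S_{\ov}$ is a quotient of a regular local ring and $S$ is a finitely generated $S_{\ov}$-algebra (generated by the $g_i^j$), $S$ is a quotient of a polynomial extension of the regular cover of $S_{\ov}$, hence a homomorphic image of a regular ring, and in particular catenary. Faltings' Annihilator Theorem (\cite{Fal78}; see also \cite{BSLC}), applied to $S_{++}\subseteq\MM$, then yields
\[
\gdepth(M) = \inf\bigl\{\depth(M_p) + \h\bigl((\MM+p)/p\bigr) : p \in \Spec(S),\ p \not\supset S_{++}\bigr\}.
\]
The finite terms come from $p$ with $p\subseteq\MM$; for those, $\MM/p$ is a maximal ideal of the catenary ring $S/p$, so $\h((\MM+p)/p) = \dim(S/p)$.

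The second step is to reduce the infimum to homogeneous primes. Let $p^* \subseteq p$ be the homogeneous core of $p$ in the sense introduced before \propref{depth+ht}. Because $S_{++}$ is homogeneous, one has $p \not\supset S_{++}$ if and only if $p^* \not\supset S_{++}$, i.e.\ $p^* \in \Proj^r(S)$. By \propref{depth+ht},
\[
\depth(M_p) + \dim(S/p) = \depth(M_{(p^*)}) + \dim(S/p^*),
\]
so each term of the infimum is realized by a homogeneous prime and
\[
\gdepth(M) = \inf_{q\in\Proj^r(S)} \bigl(\depth(M_{(q)}) + \dim(S/q)\bigr).
\]

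To finish, catenarity and the fact that $\MM$ is the unique maximal homogeneous ideal of $S$ give $\dim(S/q) = \dim(S) - \h(q)$ for every $q \in \Proj^r(S)$, while \propref{nongr}(i)—the faithfully flat extension $S_{(q)} \to S_q$ with zero-dimensional closed fiber $\res(q)$—gives $\h(q) = \dim(S_q) = \dim(S_{(q)})$. Combining these identities,
\[
\gdepth(M) = \dim(S) - \sup_{q \in \Proj^r(S)} \bigl(\dim(S_{(q)}) - \depth(M_{(q)})\bigr) = \dim(S) - \pcmd(M),
\]
as required. The main technical hurdle is a clean application of Faltings' theorem in the $\Nr$-graded non-standard framework: although the theorem itself is purely ring-theoretic and applies directly once $S$ is known to be a homomorphic image of a regular ring, one must be careful to confirm the dimension formula $\h(q) + \dim(S/q) = \dim(S)$ for all $q \in \Proj^r(S)$, which rests on both catenarity and the equidimensionality of $S$ at $\MM$.
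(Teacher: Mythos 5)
Your proposal is correct and takes essentially the same approach as the paper's proof: invoke Faltings' Annihilator Theorem for the pair $S_{++}\subseteq\MM$ to express $\gdepth(M)$ as an infimum of $\depth(M_p)+\dim(S/p)$ over non-relevant primes, reduce that infimum to homogeneous primes in $\Proj^r(S)$ via Proposition~\ref{depth+ht}, and then use catenarity together with the faithful flatness of $S_{(q)}\to S_q$ from Proposition~\ref{nongr}(i) to rewrite $\dim(S/q)=\dim(S)-\dim(S_{(q)})$, obtaining $\dim(S)-\pcmd(M)$. Your explicit flagging of the equidimensionality requirement behind $\h(q)+\dim(S/q)=\dim(S)$ is a welcome extra caution that the paper leaves implicit.
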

\begin{proof}
From \cite{Fal78}, Satz 1, (see also \cite{Mar95}) we get
$$
\gdepth(M)=\min_{p\in \Sigma}\{\depth(M_p)+ \dim(S/p)\}
$$
with $\Sigma=\{\mathfrak a\mid
\mathfrak{a}\in \Spec(S),\mathfrak a \not\supset
S_{++}\}$.
From
\propref{depth+ht}, we get that
$$
\depth(M_p)+\dim(S/p)=\depth(M_{(p^*)})+\dim(S/p^*),
$$
 so we can assume that
$p\in\Proj^r(S)$. Therefore we get
$$
\gdepth(M)=\min_{p\in\Proj^r(S)}\{\depth(M_{(p)})+\dim(S/p)\}.
$$
Since $S$ is catenary
 $\dim(S/p)=\dim(S)-\dim(S_{(p)})$, and hence
 $$
 \begin{array}{lll}
   \gdepth(M) & = & \dim(S)- \max_{p\in\Proj^r(S)}\{\dim(S_{(p)}) -\depth(M_{(p)})\}
   \\ \\
   & =& \dim(S)-\pcmd(M).
 \end{array}
$$
\end{proof}

Now, we can prove the invariance of gdepth under Veronese transforms:

\begin{corollary}
\label{fgver} Let us assume that $S_{\ov}$ is the quotient of a
regular ring and let $M$ be a finitely generated $S$-graded module,
then it holds
$$
\gdepth(M^{(\av,\bv)})=\gdepth(M)
$$
for all $\av,\bv\in\mathbb N^{*r}$.
\end{corollary}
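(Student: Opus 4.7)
The plan is to pass from generalized depth to projective Cohen--Macaulay deviation via \thmref{threetenors1}, and then match the two deviations prime by prime using \propref{nongr}. Since $S^{(\av)}_{\ov}=S_{\ov}$ is a quotient of a regular ring, the theorem applies to both modules and yields
$$
\gdepth(M)=\dim(S)-\pcmd(M),\qquad \gdepth(M^{(\av,\bv)})=\dim(S^{(\av)})-\pcmd(M^{(\av,\bv)}).
$$
Combined with $\dim(S)=\dim(S^{(\av)})$ from \propref{nongr}$(ii)$, the corollary reduces to showing $\pcmd(M)=\pcmd(M^{(\av,\bv)})$.

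I would then rewrite the maximum defining $\pcmd(M^{(\av,\bv)})$ using the homeomorphism $\psi:\Proj^r(S)\to\Proj^r(S^{(\av)})$, $p\mapsto p^{(\av)}$, also from \propref{nongr}$(ii)$. For each $p\in\Proj^r(S)$, \propref{nongr}$(iii)$ applied to the ring $S$ with shift $\ov$ gives $S^{(\av)}_{(p^{(\av)})}=S_{(p)}$, hence $\dim(S^{(\av)}_{(p^{(\av)})})=\dim(S_{(p)})$ (also a consequence of the height identity $\h(p^{(\av)})=\h(p)$), and applied to $M$ with shift $\bv$ gives $M^{(\av,\bv)}_{(p^{(\av)})}=M(\bv)_{(p)}$. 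Thus the value of the max for $\pcmd(M^{(\av,\bv)})$ indexed by $p^{(\av)}$ equals $\dim(S_{(p)})-\depth M(\bv)_{(p)}$.

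Everything therefore hinges on the shift-invariance
$$
\depth M(\bv)_{(p)}=\depth M_{(p)} \quad\text{for every } p\in\Proj^r(S),
$$
which I expect to be the main obstacle. The cleanest route is through the faithfully flat extension $S_{(p)}\hookrightarrow S_p$ of \propref{nongr}$(i)$: by the Laurent polynomial description $U^{-1}S\cong S_{(p)}[T_1^{\pm},\ldots,T_r^{\pm}]$ from that proof, the generators $g_i^{j(i)}$ are homogeneous units in $U^{-1}S$, so multiplication by a suitable monomial in them produces an explicit $S_{(p)}$-module isomorphism $M_{(p)}\cong M(\bv)_{(p)}$ whenever $\bv$ lies in the subgroup $\Gamma$; in general one instead invokes that $M_p$ and $M(\bv)_p$ coincide as ungraded $S_p$-modules and applies the standard formula relating the depth of a module to its depth after a faithfully flat base change (the correction being the depth of the closed fiber of $S_{(p)}\hookrightarrow S_p$, which depends only on $p$), so that both degree-zero pieces have the same depth. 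Plugging this back into the previous paragraphs yields $\pcmd(M)=\pcmd(M^{(\av,\bv)})$ and completes the proof.
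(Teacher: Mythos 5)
Your first half matches the paper exactly: reduce via \thmref{threetenors1} and \propref{nongr}$(ii)$ to $\pcmd$, then use \propref{nongr}$(iii)$ and the homeomorphism $\psi$ to identify $\pcmd(M^{(\av,\bv)})$ with $\pcmd(M(\bv))$. Where you diverge is the closing step, and that is where your argument has a gap.

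The paper does not attempt to prove $\pcmd(M(\bv))=\pcmd(M)$, let alone the prime-by-prime equality $\depth M(\bv)_{(p)}=\depth M_{(p)}$. Instead it applies \thmref{threetenors1} a second time to the module $M(\bv)$, turning $\dim(S)-\pcmd(M(\bv))$ into $\gdepth(M(\bv))$, and then notes that $\gdepth(M(\bv))=\gdepth(M)$ is immediate from the definition: $H^i_{\MM}(M(\bv))\cong H^i_{\MM}(M)(\bv)$ and a degree shift does not change annihilators, hence not the containment $S_{++}\subset rad(Ann_S(\cdot))$. This finishes the proof with no further localization argument.

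Your route instead tries to prove the stronger claim $\depth M(\bv)_{(p)}=\depth M_{(p)}$ for every $p$. That works cleanly when $\bv\in\Gamma$, since then a monomial in the units $g_i^{j(i)}$ of degree $\bv$ gives an $S_{(p)}$-linear isomorphism $M_{(p)}\to M(\bv)_{(p)}$. The fallback you sketch for general $\bv$ — ``$M_p=M(\bv)_p$ ungraded, then apply the faithfully flat depth formula along $S_{(p)}\to S_p$'' — does not go through. The formula $\depth_{S_p}(N\otimes_{S_{(p)}}S_p)=\depth_{S_{(p)}}(N)+\depth(\text{fiber})$ requires $M_p\cong M_{(p)}\otimes_{S_{(p)}}S_p$, i.e.\ that $U^{-1}M\cong M_{(p)}[T_1^{\pm},\dots,T_r^{\pm}]$. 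This holds only when $U^{-1}M$ is concentrated in degrees from $\Gamma$. If $M$ has generators whose degrees lie in several cosets of $\Gamma$ in $\Zr$, then $U^{-1}M$ decomposes as a direct sum of $U^{-1}S$-submodules indexed by those cosets, and $M_{(p)}$ and $M(\bv)_{(p)}$ are degree-zero slices of \emph{different} summands when $\bv\notin\Gamma$; they need not be isomorphic, and applying the base-change formula to each would at best compare one summand's depth to another's, not to $\depth(M_p)$, which is the minimum over all cosets. So the prime-by-prime equality you want is not justified, and is in fact false in general. The paper's detour back to $\gdepth$ avoids this problem entirely, which is the point of stating the closing step the way they do.
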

\begin{proof}
From  \thmref{threetenors1} we get
$$
\gdepth(M^{(\av,\bv)})=\dim(S^{(\av)})-\pcmd(M^{(\av,\bv)}).
$$
and from \propref{nongr} (ii), $\dim(S^{(\av)})=\dim(S)$.
Now, again from \propref{nongr}(iii) we deduce
$$
\dim(S^{(\av)})-\pcmd(M^{(\av,\bv)})=\dim(S)-\pcmd(M(\bv)).
$$

Again from \thmref{threetenors1}
$\gdepth(M(\bv))=\dim(S)-\pcmd(M(\bv))$. Using the definition of
$\gdepth$ we have that $\gdepth(M(\bv))=\gdepth(M)$, and so we get
the claim
$$\gdepth(M^{(\av,\bv)})=\gdepth(M).$$
\end{proof}

\bigskip
\section{Vanishing theorems and Asymptotic depth of Veronese modules.}

In this section we introduce the generalization, in the multigraded case, of
the concept of $\fg(M)$, which in the graded case controls the finitely
graduation of the local cohomology modules of a graded module $M$ with respect
to the maximal homogeneous ideal of $S$. We prove some results on the vanishing of a
module and its local cohomology modules and we relate this with the generalized
depth. For that goal, we have to fit the generalization of $\fg$, that we call
$\Gfg$, to the multigraduation. We also study the asymptotic depth of Veronese
modules. We can prove that this depth is constant for $(\av,\bv)$-Veronese
modules for $\av,\bv$ in suitable asymptotic regions of $\Nr$ by using the
previous work done in the paper.

\bigskip
We want to study the depth of the Veronese modules $M^{(\av, \bv)}$ for large
values $\av, \bv\in\Nr$. Under the hypothesis on the multidegrees of this paper
we can prove the following results by considering some Veronese modules.

\medskip
We denote by $vad(M^{(*)})$ (resp. $vad(M^{(*,*)})$) the Veronese asymptotic
depth of $M$, that means the maximum of $\depth(M^{(\av)})$  (resp.
$\depth(M^{(\av,\bv)})$)  for all $\av\in\mathbb N^{*r}$ (resp. for all $\av,
\bv \in\mathbb N^{*r}$).

\begin{proposition}\label{gg}
Let $s=vad(M^{(*)})$. There exists $\av=(a_1,\dots,a_r)\in\mathbb N^{*r}$ such
that for all $\bv\in\{(\lambda_1 a_1,\dots,\lambda_r a_r)\mid
\lambda_i\in\mathbb N^*\}$
$$\depth(M^{(\bv)})=s$$
is constant.
\end{proposition}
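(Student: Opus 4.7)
The plan is to exploit the transitivity of the Veronese construction together with the commutation of local cohomology with the Veronese functor that was recalled in the introduction. The goal is to produce an $\av$ that already achieves the supremum $s$ and then show that all further ``diagonal'' Veronese transforms have cohomology vanishing in the same range.

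First, I would observe that the supremum $s=vad(M^{(*)})$ is actually attained. Since $S^{(\av)}$ is integral over $S$ for every $\av\in\mathbb N^{*r}$ by \propref{nongr}(ii), we have $\dim(M^{(\av)})\le\dim(S^{(\av)})=\dim(S)$, and depths are nonnegative integers bounded by this common dimension. Hence the set $\{\depth(M^{(\av)}):\av\in\mathbb N^{*r}\}$ is a bounded set of nonnegative integers, so its supremum $s$ is realized by some $\av=(a_1,\dots,a_r)\in\mathbb N^{*r}$.

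Next, fix this $\av$ and let $\bv=(\lambda_1 a_1,\dots,\lambda_r a_r)$ with $\lambda_i\in\mathbb N^*$; set $\lambdav=(\lambda_1,\dots,\lambda_r)$. I would then verify the identification
$$M^{(\bv)}=\bigl(M^{(\av)}\bigr)^{(\lambdav)}$$
as graded $S^{(\bv)}$-modules, and correspondingly $S^{(\bv)}=(S^{(\av)})^{(\lambdav)}$. This is a direct degree check: the generators of $S^{(\av)}$ have multidegrees $a_i\gamma_i$, so the map $\phi^{(\av)}_{\lambdav}$ associated to the Veronese $(\cdot)^{(\lambdav)}$ of $S^{(\av)}$ sends $\nv\mapsto\sum_i\lambda_i n_i(a_i\gamma_i)=\phi_{\bv}(\nv)$, showing that the $\nv$-th graded pieces on both sides coincide with $M_{\phi_{\bv}(\nv)}$.

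With this identification in place, apply the commutation of local cohomology with the Veronese functor (stated in the introduction) to the ring $S^{(\av)}$, its maximal homogeneous ideal $\mathcal M^{(\av)}$, and the module $M^{(\av)}$:
$$\LC{i}{\mathcal M^{(\bv)}}{M^{(\bv)}}\;\cong\;\bigl(\LC{i}{\mathcal M^{(\av)}}{M^{(\av)}}\bigr)^{(\lambdav)}.$$
Because $\depth(M^{(\av)})=s$, the right-hand side vanishes for every $i<s$, so $\depth(M^{(\bv)})\ge s$. On the other hand, by the very definition of $s$ as the maximum of the depths of all Veronese transforms of $M$, we have $\depth(M^{(\bv)})\le s$, and equality follows.

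The only mildly delicate point is the bookkeeping in the second step: making sure that the Veronese of a Veronese with generator degrees $a_i\gamma_i$ reproduces exactly the $\bv$-th Veronese of $S$ and of $M$. Once this compatibility of the indexing maps $\phi_{\bv}=\phi^{(\av)}\circ(\lambdav\cdot)$ is written out, the rest of the argument is a one-line application of the already-established commutation formula and the maximality of $s$.
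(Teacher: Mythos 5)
Your proof is correct and takes essentially the same approach as the paper. The paper carries out a direct degree-by-degree calculation using the identity $\phi_{\bv}(\nv)=\phi_{\av}(\underline{\lambda}{\cdot}\nv)$ and the commutation formula over $S$, whereas you package the same observation as the transitivity $M^{(\bv)}=(M^{(\av)})^{(\underline{\lambda})}$ and then apply the commutation formula once over $S^{(\av)}$; these are the same computation seen from two angles, and both hinge on the same compatibility of the indexing maps.
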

\begin{proof}
Let $s=vad(M^{(*)})$, this means that there exist an
$\av\in\mathbb N^{*r}$ such that
$$\LC{i}{\mathcal{M}^{(\av)}}{M^{(\av)}}=0$$
for $i=0,\dots,s-1$.

Let us consider $\bv\in\{(\lambda_1 a_1,\dots,\lambda_r a_r)\mid
\lambda_i\in\mathbb N^*\}=\{\underline{\lambda}{\cdot}\av\mid
\underline{\lambda}\in\mathbb N^{*r}\}$. Then for all $\nv\in\Zr$,
since
$\phi_{\bv}(\nv)=\phi(\bv{\cdot}\nv)=\phi(\av{\cdot}\underline{\lambda}{\cdot}\nv)=\phi_{\av}(\underline{\lambda}{\cdot}\nv)$,
we have that
$$\LC{i}{\mathcal{M}^{(\bv)}}{M^{(\bv)}}_{\nv}=\LC{i}{\mathcal{M}}{M}_{\phi_{\bv}(\nv)}=\LC{i}{\mathcal{M}}{M}_{\phi_{\av}(\underline{\lambda}{\cdot}\nv)}=
\LC{i}{\mathcal{M}^{(\av)}}{M^{(\av)}}_{\underline{\lambda}{\cdot}\nv}=0$$
for $i=0,\dots,s-1$. From this, we deduce that
$\depth(M^{(\bv)})\ge s$, but $s$ was the maximum. Therefore,
$$\depth(M^{(\bv)})=s$$
for all $\bv\in\{(\lambda_1 a_1,\dots,\lambda_r a_r)\mid
\lambda_i\in\mathbb N^*\}$.
\end{proof}

\medskip
Let us consider the multigraded Rees algebra associated to ideals
$I_1,\dots,I_r$ in a Noetherian local ring $(R,\mathfrak{m})$,
$$
\mathcal{R}(I_1,\dots,I_r)=\bigoplus_{\nv\in\Nr}I_1^{n_1}T_1^{n_1}\cdots
I_r^{n_r}T_r^{n_r}\subset R[T_1,\dots,T_r].
$$

\begin{proposition}
Let $s=vad(\mathcal R(I_1,\dots,I_r)^{(*)})$.
 There exists $\av=(a_1,\dots,a_r)\in\mathbb N^{*r}$  such that for
all $\bv\in\{(\lambda_1 a_1,\dots,\lambda_r a_r)\mid
\lambda_i\in\mathbb N^*\}$
$$\depth({\mathcal R}(I_1^{b_1},\dots,I_r^{b_r}))=s.$$

\noindent Moreover, if $\depth(\mathcal R(I_1,\dots,I_r))=s$, then
$$
\depth(\mathcal R(I_1^{b_1},\dots,I_r^{b_r}))=s
$$
is constant for all $\bv\in\mathbb{N}^{*r}$.
\end{proposition}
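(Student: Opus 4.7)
The plan is to apply \propref{gg} directly to the multigraded module $M=S=\mathcal{R}(I_1,\dots,I_r)$, which fits our framework as an $\Nr$-graded $R$-algebra generated in multidegrees $\gamma_i=e_i$ (the standard basis case). The key preliminary step is to establish the isomorphism of multigraded rings
$$
\mathcal{R}(I_1,\dots,I_r)^{(\bv)} \;\cong\; \mathcal{R}(I_1^{b_1},\dots,I_r^{b_r}).
$$
This follows directly from unwinding the definitions: the $\nv$-th graded component of the left-hand side is, by definition, the component of multidegree $\phi_{\bv}(\nv)=(n_1b_1,\dots,n_rb_r)$ of $\mathcal{R}(I_1,\dots,I_r)$, which is precisely $I_1^{n_1b_1}\cdots I_r^{n_rb_r}$, the $\nv$-th component of the right-hand side.

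Granted this identification, the first assertion is immediate: \propref{gg} produces an $\av=(a_1,\dots,a_r)\in\mathbb{N}^{*r}$ such that $\depth(\mathcal{R}(I_1,\dots,I_r)^{(\bv)})=s$ for every $\bv\in\{(\lambda_1a_1,\dots,\lambda_ra_r)\mid \lambda_i\in\mathbb{N}^*\}$, and by the isomorphism this reads as $\depth(\mathcal{R}(I_1^{b_1},\dots,I_r^{b_r}))=s$.

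For the \emph{moreover} part I would exploit that, because $\gamma_i=e_i$, taking $\av=\underline{1}$ yields $\phi_{\underline{1}}=\mathrm{id}$, hence $\mathcal{R}(I_1,\dots,I_r)^{(\underline{1})}=\mathcal{R}(I_1,\dots,I_r)$. Thus the hypothesis $\depth(\mathcal{R}(I_1,\dots,I_r))=s$ says that $\av=\underline{1}$ already realizes the maximum defining $vad(\mathcal{R}(I_1,\dots,I_r)^{(*)})$. One may therefore choose $\av=\underline{1}$ in the application of \propref{gg}; the corresponding indexing set $\{(\lambda_1,\dots,\lambda_r)\mid \lambda_i\in\mathbb{N}^*\}$ is then all of $\mathbb{N}^{*r}$, giving the stated constancy.

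The only thing requiring care is the multi-index bookkeeping in the Veronese-Rees identification; beyond that, the proposition is essentially a dictionary-translation of \propref{gg} and there is no substantive obstacle.
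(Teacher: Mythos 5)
Your proposal is correct and follows essentially the same path as the paper: identify $\mathcal{R}(I_1^{b_1},\dots,I_r^{b_r})$ with $\mathcal{R}(I_1,\dots,I_r)^{(\bv)}$ (valid since the multigraded Rees algebra is standard multigraded, $\gamma_i=e_i$), apply \propref{gg}, and for the ``moreover'' part observe that $\av=\underline{1}$ may be taken as the realizing index since the proof of \propref{gg} works for any $\av$ attaining the maximum. Your unpacking of why $\av=\underline{1}$ is permissible (the hypothesis forces $\depth(M^{(\underline{1})})=s$) is exactly what the paper's terse ``by considering $\av=(1,\dots,1)$'' amounts to.
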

\begin{proof}
 Notice that the multigraded Rees algebra has a standard
graduation and hence, for $\av=(a_1,\dots,a_r)$,
$$\mathcal{R}(I_1^{a_1},\dots,I_r^{a_r})=\mathcal{R}(I_1,\dots,I_r)^{(\av)}
$$
and then the claim is a consequence of the previous proposition.
The second statement follows from the first one by considering
$\av=(1,\dots,1)$.
\end{proof}

\bigskip
We would like to extend the previous results on the asymptotic depth of the
Veronese modules to regions of $\Nr$ instead of some nets there. First we have
to study the vanishing of the local cohomology modules of a multigraded module
$M$.

\bigskip
A cone $C_{\betav}\subset\mathbb N^r$ with vertex at $\betav \in \mathbb N^r$
with respect to $\gamma_1,\dots,\gamma_r$ is a region of $\mathbb N^r$ whose
points are of the form $\betav+\sum_{i=1}^{r}\lambda_i\gamma_i\in\mathbb{N}^r$
with $\lambda_i\in\mathbb{R}_{\ge 0}$ for $i=1,\dots,r$.

Given  $\nv=(n_1,\dots,n_r)\in\mathbb Z^r$ we denote
$\nv^{*}=(|n_1|,\dots,|n_r|)\in\mathbb{N}^r$.

If $M$ be a finitely generated $\Zr$-graded $S$-module  generators
$h_1,\dots,h_s$ of multidegrees $\dv^1=(d_1^1,\dots,d_r^1), \dots,
\dv^s=(d_1^s,\dots,d_r^s) \in \mathbb Z^r$ respectively then we denote by
$\Gamma_M$ the $\Gamma$-invariant subset of $\mathbb Z^r$
$$
\Gamma_M= \bigcup_{i=1}^{s} (\dv^i + \Gamma),
$$
i.e. $\mathbb Z ^r \setminus \Gamma_M$ is the set of multi-index for which
there is no non-zero elements of $M$.

\begin{lemma}
\label{asim} For all $\betav\in\mathbb Z^r$ and $c\in \mathbb N$ there exists
$\alphav \in \Gamma_M$ such that $\alphav\ge \cv=(c,\cdots,c)$ and $\alphav \in
\betav + \Gamma.$
\end{lemma}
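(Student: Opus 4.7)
My plan is an iterative triangular elimination that exploits the upper-triangular structure of the matrix $G$ whose columns are the $\gamma_i$. The content of the lemma concerns the case where $\betav+\Gamma$ meets $\Gamma_M$, i.e., $\dv^j - \betav \in \Gamma$ for some $j$; fix such an index $j$ and write $\dv^j - \betav = \sum_{i=1}^r u_i \gamma_i$ with $u_i \in \mathbb{Z}$. Since $\Gamma$ is a subgroup of $\mathbb{Z}^r$, cosets are either equal or disjoint, so $\betav + \Gamma = \dv^j + \Gamma \subset \Gamma_M$. Consequently, any element of $\betav + \Gamma$ is automatically in $\Gamma_M$, and it remains only to produce one with $\alphav \geq \cv$.

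For this I would look for $\alphav$ of the form
$$
\alphav = \betav + \sum_{i=1}^r v_i \gamma_i
$$
with integer coefficients $v_1,\dots,v_r$ picked iteratively \emph{from $i=r$ downwards}. The key structural fact is that $\gamma_j^i = 0$ for $i > j$, so changing $v_j$ affects only the first $j$ coordinates of $\alphav$. Concretely, $\alphav_r = \betav_r + v_r \gamma_r^r$, and since $\gamma_r^r > 0$, I choose $v_r$ large enough to force $\alphav_r \geq c$. With $v_r$ frozen, the $(r-1)$-th coordinate becomes $\alphav_{r-1} = \betav_{r-1} + v_{r-1}\gamma_{r-1}^{r-1} + v_r \gamma_{r-1}^r$, and $v_{r-1}$ is chosen large enough to make $\alphav_{r-1} \geq c$, without disturbing $\alphav_r$. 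Iterating downward through $v_{r-2}, \ldots, v_1$ yields an $\alphav$ with every coordinate at least $c$, while maintaining $\alphav \in \betav + \Gamma$ by construction.

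The main obstacle is really just the initial coset-matching step, which is settled once $j$ is fixed as above. The descent itself is elementary bookkeeping and works exactly because $G$ is triangular with strictly positive diagonal entries $\gamma_i^i > 0$: each $v_i$ can be solved for \emph{after} $v_{i+1},\dots,v_r$ are known, against a single scalar inequality. No information about the module $M$ beyond the generator degrees $\dv^1, \dots, \dv^s$ enters the argument; the lemma is a linear-algebraic statement about $\Gamma$ combined with the definition of $\Gamma_M$.
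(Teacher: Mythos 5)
Your proof is correct and uses essentially the same idea as the paper's: exploit the upper-triangular structure of $G$ (with strictly positive diagonal $\gamma_i^i$) to push all $r$ coordinates of $\alphav$ above $c$ while staying in a fixed $\Gamma$-coset. Where the paper parameterizes $\alphav = \dv^1 + G\tv$ and just asserts $\tv \gg \underline{0}$ suffices, you make the same descent explicit coordinate by coordinate from $v_r$ down to $v_1$; you are also more careful in flagging that the coset-matching condition $\dv^j - \betav \in \Gamma$ is needed for the conclusion $\alphav \in \Gamma_M$ to make sense, which the paper's computation $\nv = \tv + G^{-1}(\dv^1 - \betav)$ tacitly assumes (otherwise $\nv \notin \mathbb Z^r$ and $\alphav \notin \betav + \Gamma$).
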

\begin{proof}
The condition $\alphav \in (\betav + \Gamma)\cap (\dv^1 +\Gamma)$ can be
translated to the equation
$$
\alphav= \dv^1 + G \tv = \betav + G \nv,
$$
so
$$
\nv= \tv + G^{-1}(\dv^1-\betav).
$$
Hence for a $\tv \gg \underline{0}$ we get that $\nv\gg \underline{0}$, so
$\alphav \in \Gamma_M \cap (\betav +\Gamma)$ and $\alphav \ge \cv$.
\end{proof}

\begin{proposition}\label{S++^uM=0}
Let $M$ be a finitely generated $\Zr$-graded $S$-module such that
$S_{++}\subset rad(Ann_S(M))$. Then there exists
$\betav=(\beta_1,\dots,\beta_r)\in\Gamma_M$ such that $M_{\nv}=0$, for all
$\nv\in\Zr$ such that $\nv^{*}\in C_{\betav}$.
\end{proposition}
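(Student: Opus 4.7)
The plan is to convert the radical-containment hypothesis into a uniform exponent $u\in\mathbb N$ with $S_{++}^u M=0$ and then to choose $\betav$ large enough that $\nv^*\in C_\betav$ forces every piece of $M_\nv$ into $S_{++}^u M$. Since $M$ is finitely generated, such a $u$ exists. The combinatorial heart is a monomial description of $S_{++}^u$: the upper-triangular non-singularity of $G$ gives every $\nv\in G\mathbb Z^r$ a unique representation $\nv=\sum_k e_k\gamma_k$, so every monomial of $S_\nv$ carries exactly the exponent vector $(e_1,\dots,e_r)$; the ideal $I_k$ is the $S_{\ov}$-span of monomials with $e_k\ge 1$, and therefore $S_{++}^u=I_1^u\cdots I_r^u$ is the $S_{\ov}$-span of monomials with every $e_k\ge u$. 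In particular $S_\nv\subset S_{++}^u$ whenever $\nv\in G\Nr$ and $G^{-1}\nv\ge(u,\dots,u)$.

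Next I would fix a finite homogeneous generating set $h_1,\dots,h_s$ of $M$ in multidegrees $\dv^1,\dots,\dv^s$, so that $M_\nv=\sum_{j:\,\nv-\dv^j\in G\Nr}S_{\nv-\dv^j}h_j$. An explicit $\betav$ is obtained by picking any index $j_0\in\{1,\dots,s\}$ and setting $\betav=\dv^{j_0}+N(\gamma_1+\cdots+\gamma_r)$ for a large integer $N$: then $\betav\in\dv^{j_0}+\Gamma\subset\Gamma_M$, and $\betav\in\Nr$ since $\gamma_1+\cdots+\gamma_r$ has strictly positive entries. For $N$ sufficiently large the finitely many inequalities $\beta_k>\max_j|d_k^j|$ (one per coordinate $k$) and
$G^{-1}(\betav-\dv^j)=G^{-1}(\dv^{j_0}-\dv^j)+(N,\dots,N)\ge(u,\dots,u)$ (one per $j$) simultaneously hold.

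Finally, for $\nv\in\mathbb Z^r$ with $\nv^*\in C_\betav$, write $\nv^*=\betav+G\underline\lambda$ with $\underline\lambda\in\mathbb R^r_{\ge 0}$, so that $|n_k|\ge\beta_k>\max_j|d_k^j|$ for every $k$. If some $n_k<0$, then $n_k<-|d_k^j|\le d_k^j$ for every $j$, so $\nv\notin\dv^j+G\Nr$ for any $j$ and $M_\nv=0$ automatically. Otherwise $\nv=\nv^*$, and for each $j$ with $\nv-\dv^j\in G\Nr$ the identity
\[
G^{-1}(\nv-\dv^j)=G^{-1}(\betav-\dv^j)+\underline\lambda\ge(u,\dots,u)
\]
holds, so $S_{\nv-\dv^j}\subset S_{++}^u$ and $S_{\nv-\dv^j}h_j\subset S_{++}^u M=0$; summing over $j$ gives $M_\nv=0$. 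The main obstacle is coordinating the real cone $C_\betav$, the integer lattice $G\mathbb Z^r$, and the absolute-value twist in $\nv^*$ into a single construction, but finiteness of the generating set together with the upper triangular structure of $G$ reduces everything to choosing the one parameter $N$ large enough.
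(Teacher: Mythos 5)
Your proof is correct, and it takes a genuinely cleaner route than the paper's. The paper first treats the $\mathbb N^r$-generated case, choosing $\betav$ by the backward recursion $\beta_i=u\gamma_i^i+\beta_{i+1}\gamma_i^{i+1}+\cdots+\beta_r\gamma_i^r+\alpha_i$ and then running a coordinate-by-coordinate induction to show every monomial $\underline{g_1}^{\underline{m_1}}\cdots\underline{g_r}^{\underline{m_r}}h_j$ of degree in $C_{\betav}$ has $|\underline{m_i}|\ge u$ for all $i$; the general case is then reduced to this one by passing to the submodule $N=\bigoplus_{\nv\ge\ov}M_{\nv}$ and shifting the cone vertex via \lemref{asim}. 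You instead isolate the key combinatorial fact as a single clean statement: since $G$ is non-singular and upper triangular, $S_{++}^u=I_1^u\cdots I_r^u$ is exactly the $S_{\ov}$-span of monomials whose exponent vector $G^{-1}\nv$ is $\ge(u,\dots,u)$, so the containment $S_{\nv-\dv^j}h_j\subset S_{++}^uM=0$ becomes a one-line check with $G^{-1}$ rather than a recursion. Choosing $\betav=\dv^{j_0}+N(\gamma_1+\cdots+\gamma_r)$ makes membership in $\Gamma_M$ and $\mathbb N^r$ transparent, and you handle the $\mathbb Z^r$-grading and the absolute-value twist directly (negative $n_k$ with $|n_k|>\max_j|d_k^j|$ kills $S_{\nv-\dv^j}$ outright), avoiding the two-case split and the auxiliary lemma entirely. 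Both arguments rest on the same idea — push $\betav$ far enough into the positive orthant that the cone forces all exponents $\ge u$ — but your formulation via $G^{-1}$ is more uniform and arguably easier to verify; the paper's explicit recursion has the advantage of producing a concrete formula for $\betav$ without the auxiliary parameter $N$, which could matter if one wanted effective bounds.
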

\begin{proof}
We prove the result first assuming that  $M$ is $\mathbb N^r$ generated, i.e.
we assume that $h_1,\dots,h_s$ are the generators of the $S$-module $M$ with
multidegrees $(d_1^1,\dots,d_r^1), \dots, (d_1^s,\dots,d_r^s) \in \mathbb N^r$
respectively. Let $\alphav=(\alpha_1,\dots,\alpha_r)\in\mathbb Z^r$ be the
maximum componentwise of these multidegrees, i.e. $\alpha_i=\max\{d_i^1,\cdots,
d_i^s\}$, $i=1,\cdots, r$.

The elements of $M_{\nv}$, $\nv\in\mathbb N^r$, are linear combinations with
coefficients on $S_{\ov}$
 of elements of the type
$$\underline{g_1}^{\underline{m_1}}\dots\underline{g_r}^{\underline{m_r}}h_j$$
where, using multiindex notation,
$\underline{g_t}^{\underline{m_t}}=({g_t^1})^{m_t^1}\cdots
({g_t^{\mu_t}})^{m_t^{\mu_t}}$ with
$\underline{m_t}=(m_t^1,\dots,m_t^{\mu_t})\in\mathbb N^{\mu_t}$. This element
has multidegree
$$
\nv=\deg(\underline{g_1}^{\underline{m_1}}\dots\underline{g_r}^{\underline{m_r}}h_j)
= G
\left(\begin{array}{c}|\underline{m_1}|\\\vdots\\|\underline{m_r}|\end{array}\right)+
\left(\begin{array}{c}d_r^j\\\vdots\\d_r^j\end{array}\right).
$$

Let $u$ be a non-negative integer such that $(S_{++})^{u}M=0$. We define
$\betav$ recursively:
$$
\beta_i=u \gamma_i^i + \beta_{i+1}  \gamma_{i}^{i+1} +\cdots + \beta_{r}
\gamma_{i}^{r}+ \alpha_i
$$
for $i=r,\cdots, 1$.

Given a multi-index $\nv=\beta+ \sum_{i=1}^r \lambda_i \gamma_i \in
C_{\betav}\cap  \Gamma_M$, $\lambda_i\ge 0$,
 we have to prove that $M_{\nv}=0$.
This is equivalent to prove that if
$$
\nv = G \left(\begin{array}{c}\lambda_1\\\vdots\\\lambda_1\end{array}\right)+
\left(\begin{array}{c}\beta_1\\\vdots\\\beta_r\end{array}\right) = G
\left(\begin{array}{c}|\underline{m_1}|\\\vdots\\|\underline{m_r}|\end{array}\right)+
\left(\begin{array}{c}d_1^j\\\vdots\\d_r^j\end{array}\right)
$$
then $|\underline{m_1}|\ge u+\lambda_1, \cdots , |\underline{m_r}|\ge u +
\lambda_r$.

We will prove  by recurrence a stronger result:
$$
\beta_i +\lambda_i \ge |\underline{m_i}|\ge u + \lambda_i
$$
for $i=1,\cdots ,r$. From the definition of  $ \beta_r=u \gamma_r^r + \alpha_r$
and
$$
\beta_r + \lambda_r \gamma_r^r =|\underline{m_r}| \gamma_r^r + d_r^j
$$
we deduce
$$
\gamma_r^r(|\underline{m_r}|-(u+\lambda_r))=\alpha_r- d_r^j\ge 0.
$$
Since $\gamma_r^r\ge 1$ we get
$$
|\underline{m_r}|\ge u+\lambda_r.
$$
On the other hand
$$
\beta_r + \lambda_r - |\underline{m_r}|= d_r^r +
(\gamma_r^r-1)(|\underline{m_r}|-\lambda_r)\ge 0
$$

Let us assume that $\beta_r +\lambda_r\ge |\underline{m_r}|\ge u+\lambda_r,
\cdots, \beta_{i+1} +\lambda_{i+1}\ge |\underline{m_{i+1}}|\ge u+\lambda_{i+1}$
we will prove that $\beta_{i}  +\lambda_{i}\ge |\underline{m_{i}}|\ge
u+\lambda_{i}$, $i\ge 1$. We have
$$
\beta_i+ \lambda_i \gamma_i^i + \lambda_{i+1}  \gamma_{i}^{i+1} +\cdots +
\lambda_{r}  \gamma_{i}^{r} = |\underline{m_{i}}| \gamma_i^i +
|\underline{m_{i+1}}|  \gamma_{i}^{i+1} +\cdots + |\underline{m_{r}}|
 \gamma_{i}^{r}+ d_i^j
$$
so
$$
 \gamma_i^i(u+\lambda_i-|\underline{m_{i}}|) +\sum_{l=i+1}^r \gamma_i^l (\beta_l+\lambda_l-|\underline{m_{l}}|)+
 \alpha_i - d_i^j= 0.
$$
By induction we deduce
$$
|\underline{m_{i}}|\ge u+\lambda_i.
$$
A simple computation shows that
$$
\beta_i + \lambda_i - |\underline{m_i}|=
(\gamma_i^i-1)(|\underline{m_i}|-\lambda_i)+ \sum_{l=i+1}^r \gamma_i^l
(|\underline{m_{l}}|-\lambda_l)+ d_i^j \ge 0.
$$
\noindent Hence we have proved that $M_{\nv}=0$ for all $\nv\in C_{\betav}$.

\medskip
Let us assume now that $M$ is generated by
 $h_1,\dots,h_s$ with
multidegrees $(d_1^1,\dots,d_r^1), \dots, (d_1^s,\dots,d_r^s) \in \mathbb Z^r$
respectively. Let $c= |\min\{ 0, d_i^j, j=1,\cdots, s, i=1,\cdots, r\}|$. Let
$N$ be the following submodule of $M$:
$$
N=\bigoplus_{\nv\ge \ov} M_{\nv}
$$
From \lemref{asim} there is $\alphav\in \Gamma_M$ such that $\alphav \ge \cv$
and $\alphav \in \Gamma_M \cap (\betav(N) +\Gamma).$ Since $C_{\alphav}\subset
C_{\betav}$  and $\alphav\ge \cv$ we get that  $M_{\nv}=0$ for all
$\nv\in\mathbb Z^r$ and $n^*\in C_{\betav}$.
\end{proof}

\begin{corollary}
\label{S++^u(M/N)=0} Let $M$ be a finitely generated $\Zr$-graded $S$-module
and $N\subset M$ a submodule. We assume that $(S_{++})^u(M/N)=0$ for
$u\in\mathbb Z$. Then there exists $\betav \in \Gamma_{M/N}$ such that
$M_{\nv}\subset N_{\nv}$, for all $\nv \in\Zr$ such that $\nv^{*} \in
C_{\betav}$.
\end{corollary}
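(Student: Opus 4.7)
The plan is to deduce this corollary directly from \propref{S++^uM=0} applied to the quotient module $M/N$.

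First I would verify that $M/N$ satisfies the hypotheses of \propref{S++^uM=0}. Since $M$ is a finitely generated $\mathbb Z^r$-graded $S$-module and $N\subset M$ is a graded submodule, the quotient $M/N$ inherits a natural $\mathbb Z^r$-grading and is again finitely generated. The assumption $(S_{++})^u(M/N)=0$ means that $(S_{++})^u\subset \mathrm{Ann}_S(M/N)$, and hence $S_{++}\subset \mathrm{rad}(\mathrm{Ann}_S(M/N))$, which is precisely the hypothesis needed.

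Applying \propref{S++^uM=0} to $M/N$, I obtain $\betav\in\Gamma_{M/N}$ such that $(M/N)_{\nv}=0$ for every $\nv\in\mathbb Z^r$ with $\nv^{*}\in C_{\betav}$. Since the grading is compatible with taking quotients, $(M/N)_{\nv}=M_{\nv}/N_{\nv}$, so this vanishing translates into the inclusion $M_{\nv}\subset N_{\nv}$ (in fact equality) for all such multi-indices $\nv$, which is exactly the conclusion of the corollary.

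There is no real obstacle here, as the statement is a formal consequence of the preceding proposition; the only point requiring mild attention is the bookkeeping about $\Gamma_{M/N}$ versus $\Gamma_M$ and the fact that the vertex $\betav$ produced by \propref{S++^uM=0} belongs to $\Gamma_{M/N}$ by construction, which is built into the statement of that proposition.
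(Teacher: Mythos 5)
Your proof is correct and follows exactly the same route as the paper: apply \propref{S++^uM=0} to the finitely generated quotient $M/N$ and translate the vanishing of $(M/N)_{\nv}$ into the inclusion $M_{\nv}\subset N_{\nv}$. You merely spell out the bookkeeping (that $(S_{++})^u(M/N)=0$ gives $S_{++}\subset\mathrm{rad}(\mathrm{Ann}_S(M/N))$) a bit more explicitly than the paper does.
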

\begin{proof}
It is only necessary to use \propref{S++^uM=0} with the finitely generated
module $M/N$. There will exists a cone $C_{\betav}$ where $(M/N)_{\nv}=0$ for
$\nv^{*}\in C_{\betav}$, and hence $M_{\nv}\subset N_{\nv}$.
\end{proof}

\bigskip
We say that a $S$-graded module $M$ is \textit{$\Gamma$-finitely
graded} if there exists a cone $C_{\betav}\subset\mathbb N^r$
where $M_{\nv}=0$ for all $\nv\in\mathbb Z^r$ such that
$\nv^{*}\in C_{\betav}$. We denote by $\Gfg(M)$ the greatest
integer $k \ge 0$ such that $ \LC{i}{\mathcal M}{M} $ if
$\Gamma$-finitely graded  for all  $i < k$, see \cite{Mar95}.

\begin{remark}
Notice that in the standard graded case, i.e. $r=1$,  the
definition of $\Gfg(M)$ coincides with the classical
$$\fg(M)=\max\{k\ge 0\mid \LC{i}{\mathcal M}{M} \text{ is finitely
graded for all }i<k\}.$$
 In this case  a module is
finitely graded if the pieces of degree $n$ are 0 for $|n|\ge
n_0$, for some $n_0\in\mathbb N$, which is, in fact, a cone with
vertex in $n_0$, so
$$\fg(M)=\Gfg(M).$$
\end{remark}

\bigskip
From now on we assume that the ordering is almost-standard. By
almost-standard multigraded (or $\Zr$-graded) ring $S$ we mean the
multigraded ring with generators of multidegrees
$$
\begin{array}{l}
\gamma_1=(\gamma_1^1,0,\dots,0)=\gamma_1^1e_1\\ ... \\
\gamma_i=(0,\dots,0,\gamma_i^i,0,\dots,0)=\gamma_i^i
e_i\\ ...\\ \gamma_r=(0,\dots,0,\gamma_r^r)=\gamma_r^re_r\\
\end{array}
$$
with $\gamma_1^1,\dots,\gamma_r^r > 0$ and $e_1,\dots,e_r$ the
canonical basis of $\mathbb{R}^r$.
Notice that in this case we have
$$
C_{\betav}= (\betav + (\mathbb R_{\ge 0})^r)\cap \mathbb N^r
$$
for all $\betav \in \mathbb Z^r$.
Notice that the intersection of two cones is a cone:
$$
C_{\alphav} \cap C_{\betav} = C_{\deltav}
$$
with $\delta=(\max\{\alpha_i, \beta_i\}; i=1,\cdots, r)$.

\bigskip
An important point in the proof of the main proposition, is to assure that
$H^k_{\MM}(M)$ is $\Gamma$-finitely graded for all $k\ge 0$ in case that the
module $M$ is $\Gamma$-finitely graded as well. For that reason we have to
restrict the graduation to the almost-standard case. We prove that in the next
proposition.

\begin{proposition}\label{local coho fgrad}
Let $M$ be a finitely generated $\Zr$-graded $S$-module. If $M$ is
$\Gamma$-finitely graded then $H^k_{\mathcal{M}}(M)$ is also
$\Gamma$-finitely graded for all $k\ge 0$.
\end{proposition}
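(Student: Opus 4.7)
The plan is to compute $H^{k}_{\MM}(M)$ via the \v{C}ech complex on a homogeneous generating set of $\MM$ and to exhibit a single cone $C_{\betav}$ on which every term of this complex vanishes; the cohomology then inherits the same vanishing. First I would fix homogeneous generators $f_{1},\dots,f_{s}$ of $\mathfrak{m}$ (all of multidegree $\ov$) together with the algebra generators $g_{i}^{j}$, which in the almost-standard case have degree $\gamma_{i}=\gamma_{i}^{i}e_{i}$. These together generate $\MM=\mathfrak{m}\oplus S_{+}$ as an ideal of $S$, so $H^{k}_{\MM}(M)$ is the $k$-th cohomology of the resulting $\Zr$-graded \v{C}ech complex whose terms are direct sums of localizations $M_{y_{T}}$, where $y_{T}=\prod_{\ell\in T}\ell$ runs over subsets $T$ of this generating set.

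To analyze each $M_{y_{T}}$, I would decompose $T=T_{0}\sqcup T_{1}\sqcup\dots\sqcup T_{r}$, with $T_{0}\subseteq\{f_{1},\dots,f_{s}\}$ and $T_{i}\subseteq\{g_{i}^{1},\dots,g_{i}^{\mu_{i}}\}$, and set $I(T)=\{i\ge 1: T_{i}\neq\emptyset\}$. In the almost-standard case one has $\deg y_{T}=\sum_{i\in I(T)}|T_{i}|\gamma_{i}^{i}e_{i}$, and crucially the $j$-th coordinate of $\deg y_{T}$ is zero whenever $j\notin I(T)$. I would then use the standard identification of $(M_{y_{T}})_{\nv}$ with the direct limit of $\{M_{\nv+c\,\deg y_{T}}\}_{c\ge 0}$ under multiplication by $y_{T}$, and combine it with the hypothesis as follows: if $\nv^{*}\in C_{\betav}$ then $|n_{j}|\ge\beta_{j}$ for every $j\notin I(T)$, so those coordinates already satisfy the cone condition in absolute value and remain fixed throughout the direct system; meanwhile for $i\in I(T)$ the $i$-th coordinate of $\nv+c\,\deg y_{T}$ grows linearly in $c$ and eventually exceeds $\beta_{i}$. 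Hence $M_{\nv+c\,\deg y_{T}}=0$ for $c$ sufficiently large, which forces $(M_{y_{T}})_{\nv}=0$.

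From here the conclusion is immediate: every \v{C}ech term vanishes in multidegree $\nv$ whenever $\nv^{*}\in C_{\betav}$, so the same holds for the subquotient $H^{k}_{\MM}(M)_{\nv}$, and $H^{k}_{\MM}(M)$ is $\Gamma$-finitely graded with the very same cone $C_{\betav}$ that works for $M$. The point I expect to be the main subtlety is isolating where the almost-standard hypothesis is actually used: it is precisely the fact that $\deg y_{T}$ is supported only on the coordinates in $I(T)$ that allows the coordinates outside $I(T)$ to be preserved along the direct system, so that the cone condition $|n_{j}|\ge\beta_{j}$ for $j\notin I(T)$ is maintained. Without this, inverting some $g_{i}^{j}$ could shift coordinates strictly less than $i$ and move the direct limit out of the prescribed cone, defeating the vanishing argument.
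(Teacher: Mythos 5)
Your proof is correct and takes essentially the same route as the paper: both compute $H^k_{\MM}(M)$ via the \v{C}ech complex on the generators of $\maxi$ together with the $g_i^j$, and both show each localization $M_{y_T}$ vanishes in every degree $\nv$ with $\nv^*\in C_{\betav}$ by pushing the degree further into the cone upon multiplying by the product $y_T$ of inverted generators. Your direct-limit phrasing and the observation that $\deg y_T$ is supported only on the coordinates in $I(T)$ is a cleaner packaging of the paper's sign-vector $\etav$, $\varepsilonv$ bookkeeping, but the underlying argument (and the place where the almost-standard hypothesis enters) is the same.
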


\begin{proof}
Since $M$ is $\Gamma$-finitely graded, it means that there exist an element
$\betav\in\Nr$ such that $M_{\nv}=0$ for all $\nv\in\Zr$ with $\nv^*\in
C_{\betav}$. We want to prove that $H^k_{\mathcal{M}}(M)_{\nv}=0$ for
$\nv\in\Zr$ with $\nv^*\in C_{\betav}$ as well.

Since $H^0_{\mathcal{M}}(M)=\Gamma_{\mathcal{M}}(M)\subseteq M$, then we
have directly the claim for $k=0$.
Let us assume that $k>0$.

The ideal  $\mathcal{M}$ is generated by a system of generators of
$\maxi$, say $h_1,\cdots,h_v$, and by $g_i^j$, $j=1,\cdots, \mu_i$,
$i=1,\cdots,r$.
If we denote by $f_1,\cdots,f_{\sigma}$ the above system of generators of $\MM$ then   the local cohomology
modules $\LC{*}{\mathcal M}{M}$ are the cohomology modules of the
Koszul complex
$$
0\longrightarrow M
\longrightarrow \bigoplus_{i=1}^{\sigma} M_{f_i}
\longrightarrow \bigoplus_{1\le i < j\le \sigma} M_{f_i f_j}
\longrightarrow
\cdots
\longrightarrow
M_{f_1 \cdots  f_\sigma}
\longrightarrow
0.
$$
The module $\LC{k}{\mathcal M}{M}$ is  $S$-graded: the grading
is induced by the grading defined on the localizations
$M_g$, where  $g$ is an arbitrary  product of $k$ different generators of $\mathcal M$.
Given $z=x /g^t \in M_g$ we have
$$
\deg(z)=\deg\left(\frac{x}{g^t}\right)=\deg(x)- t\; \deg(g).
$$

If we assume that $\deg(z)=\nv$ with $\nv^*\in C_{\betav}$ then  there
exist a vector $\varepsilonv=(\varepsilon_1,\dots,\varepsilon_r)\in\{-1,+1\}^r$
such that
$\varepsilonv . \nv=\betav+G\underline{\lambda}$ with $\lambda_i\in\mathbb R_{\ge
0}$.
We denote here $\varepsilonv . \nv$ for the termwise product of $\varepsilonv$ and $\nv$.
So,
$$
\nv=\varepsilonv . (\betav +G\underline{\lambda}).
$$
On the other hand we may assume, without loss of generality, that
$\deg(g)=G \kv$ with $\kv=(k_1,\cdots, k_w,0,\cdots,0)$ with $k_i\neq 0$,
$i=1,\cdots,w$.
Hence we have
$$
\deg(x g^s)=
\deg(z)+(t+s) \deg(g)=
\varepsilonv . (\betav +G\underline{\lambda}) +
(t+s) G \kv
$$
for all $s\ge 0$.

We want to prove that $\deg(x g^s )^*\in C_{\betav}$, for some $s\ge
0$, so we have to assure that there exists
$\underline{\mu}\in(\mathbb R_{\ge 0})^r$  and $\etav\in \{-1,+1\}^r$ such that

$$
\etav . [\varepsilonv . (\betav +G\underline{\lambda}) +
(t+s) G \kv ]
=\betav+G\underline{\mu}.
$$

\noindent
For $i=w+1\cdots, r$ we have the equation
$$
\eta_i \; \varepsilon_i (\beta_i + \lambda_i \gamma_i^i)= \beta_i + \mu_i \gamma_i^i,
$$
we set $\eta_i=\varepsilon_i$ and $\mu_i=\lambda_i\ge 0$.

\noindent
For $i=1,\cdots, w$ we set $\eta_i=1$, and then we have to consider the equation
$$
\varepsilon_i (\beta_i + \lambda_i \gamma_i^i)+ (t+s) k_i \gamma_i^i= \beta_i + \mu_i \gamma_i^i.
$$
If $\varepsilon_i=1$ then
$$
\mu_i=\lambda_i + (t+s) k_i\ge 0.
$$
If $\varepsilon_i=-1$ then
$$
\mu_i= - 2 \; \frac{\beta_i}{\gamma_i^{i}} - \lambda_i + (t+s) k_i\ge 0
$$
for an integer $s\gg 0$.

We have proved that $H^k_{\mathcal{M}}(M)_{\nv}=0$ for $\nv\in\Zr$ with
$\nv^*\in C_{\betav}$, so $H^k_{\mathcal{M}}(M)$ is $\Gamma$-finitely graded.
\end{proof}

\bigskip
 In the next result we relate the
two integers attached to $M$ studied in the paper, $\gdepth(M)$ and $\Gfg(M)$.
The first part of the next result follows \cite{Mar95}, Proposition 2.3. or
\cite{TI89}, Lemma 2.2. Since these papers use extensively results on $\mathbb
Z$-graded modules we will adapt them in the almost-standard multigraded case
that we consider here.

\begin{theorem}
\label{threetenors}
Let $M$ be a finitely generated $S$-graded module, then it holds
$$
\Gfg(M)=\gdepth(M).
$$
\end{theorem}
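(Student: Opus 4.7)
I will prove the two inequalities $\gdepth(M)\le\Gfg(M)$ and $\Gfg(M)\le\gdepth(M)$ separately, adapting the $\mathbb Z$-graded arguments of \cite{Mar95}, Proposition 2.3, and \cite{TI89}, Lemma 2.2, to the almost-standard multigraded setting.

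For $\Gfg(M)\le\gdepth(M)$, I fix $i<\Gfg(M)$, so $H^i_{\mathcal M}(M)$ is $\Gamma$-finitely graded: there exists $\betav$ with $H^i_{\mathcal M}(M)_{\nv}=0$ whenever $\nv^{*}\in C_{\betav}$, i.e.\ $|n_l|\ge\beta_l$ for every $l$. Given a homogeneous $g\in S_{++}$ of multidegree $\dv$, the almost-standard hypothesis $S_{++}=I_1\cdots I_r$ forces every coordinate $d_l$ to be strictly positive. For a non-zero $z\in H^i_{\mathcal M}(M)_{\nv}$, the $\Gamma$-finite graduation forces $|n_{l_0}|<\beta_{l_0}$ for some $l_0$, while in the other coordinates the bound on the support of $H^i_{\mathcal M}(M)$ coming from finite generation of $M$ controls $n_l$. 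The key estimate is that for $m$ large enough (in terms only of $\betav$, $\dv$, and the support data of $M$, not of $\nv$) one has $|n_l+m d_l|\ge\beta_l$ for every $l$, so $\nv+m\dv\in C_{\betav}$ and therefore $g^m z=0$. Taking a uniform bound across a finite set of generators of $S_{++}$ yields $S_{++}\subset\mathrm{rad}(\mathrm{Ann}_S H^i_{\mathcal M}(M))$.

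For $\gdepth(M)\le\Gfg(M)$, I fix $i<\gdepth(M)$. Finite generation of $S_{++}$ upgrades $S_{++}\subset\mathrm{rad}(\mathrm{Ann}_S H^i_{\mathcal M}(M))$ to $(S_{++})^{u}H^i_{\mathcal M}(M)=0$ for some $u\in\mathbb N$. For each cyclic submodule $Sz\subset H^i_{\mathcal M}(M)$, the hypotheses of \propref{S++^uM=0} are met (it is finitely generated and killed by $(S_{++})^u$), producing a cone of vanishing. I then combine these local cones with the a priori upper bound on the support of $H^i_{\mathcal M}(M)$ (a consequence of finite generation of $M$, together with the almost-standard form of cones $C_{\betav}=(\betav+\mathbb R_{\ge 0}^r)\cap\mathbb N^r$ which makes the intersection $C_{\alphav}\cap C_{\betav}$ itself a cone) to extract a single cone $C_{\betav}$ valid for all of $H^i_{\mathcal M}(M)$, showing it is $\Gamma$-finitely graded.

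\textbf{Main obstacle.} The core difficulty is that $H^i_{\mathcal M}(M)$ is generally not finitely generated, so \propref{S++^uM=0} cannot be applied directly to it. One must pass via cyclic (hence finitely generated) submodules and assemble their vanishing cones into a global one, and the almost-standard hypothesis is essential because it gives the cones $C_{\betav}$ a rectangular shape aligned with the coordinate axes. That shape is precisely what lets the strictly positive shift vectors $\dv$ coming from $S_{++}$ uniformly push any non-vanishing degree into the cone in Step~1, and what makes the intersection of local cones cone-like in Step~2.
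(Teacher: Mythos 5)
Your overall framework of proving the two inequalities separately matches the paper, but both halves of your proposal have genuine gaps, and the two halves depart in different ways from the argument the paper actually uses.

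\textbf{The inequality $\Gfg(M)\le\gdepth(M)$.} You propose to take a homogeneous $g\in S_{++}$ of multidegree $\dv$ with all $d_l>0$ and argue that some fixed power $g^m$ shifts every $\nv$ with $\nv^*\notin C_{\betav}$ into the cone. This fails: for a coordinate $l$ in which $|n_l|\ge\beta_l$ with $n_l$ large \emph{negative}, the shift $n_l\mapsto n_l+md_l$ can land in the forbidden strip $(-\beta_l,\beta_l)$ (indeed $n_l=-md_l$ gives exactly zero). There is no uniform $m$ that works for all such $\nv$, and the vague appeal to an unspecified ``support bound'' on $H^i_{\mathcal M}(M)$ does not resolve this in $r>1$ variables, because the support is not bounded below. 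The paper's proof avoids the problem by \emph{not shifting the coordinates in which $|n_l|\ge\beta_l$}: one factors $x=z_1 z_2$ so that $z_1$ carries degree only in the coordinates where $|n_l|<\beta_l$, and then $z_1^{\,a}$ with $a=2\max_l\beta_l$ already pushes $\nv$ into $C_{\betav}$ without touching the other coordinates; finally $x^a=z_1^a z_2^a$ annihilates the same piece. That decomposition is the essential device you are missing, and it is where the almost-standard hypothesis is actually used in this half of the proof.

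\textbf{The inequality $\gdepth(M)\le\Gfg(M)$.} You correctly diagnose that $H^i_{\mathcal M}(M)$ is not finitely generated, so \propref{S++^uM=0} cannot be applied to it directly, and you propose to pass to cyclic submodules $Sz\subset H^i_{\mathcal M}(M)$. But the cone $C_{\betav(z)}$ produced by \propref{S++^uM=0} for $Sz$ has its vertex $\betav(z)$ depending on the multidegree of $z$, and those multidegrees are unbounded as $z$ ranges over homogeneous elements of $H^i_{\mathcal M}(M)$; the vertices escape to infinity and there is no common cone. The ``a priori upper bound on the support of $H^i_{\mathcal M}(M)$'' you invoke to glue the local cones is never established, is not used anywhere in the paper, and even if one had a one-sided bound it would not bound the support below, which is what you would need. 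The paper takes an entirely different route: it works with $M$ itself rather than with $H^i_{\mathcal M}(M)$. Starting from a minimal primary decomposition $0=N_1\cap\dots\cap N_t$ it picks a homogeneous $z\in S_{++}$ avoiding the associated primes that do not contain $S_{++}$, shows $(0:_M z)$ is $\Gamma$-finitely graded using \propref{S++^uM=0} and \corref{S++^u(M/N)=0}, kills $(0:_M z)$ off and replaces $z$ by a power to arrange that $z$ is a nonzerodivisor on $M$ annihilating $\LC{i}{\mathcal M}{M}$ for all $i<\gdepth(M)$, and then runs an induction on $c\le\gdepth(M)$ via the long exact sequence for $0\to M(-\rv)\stackrel{\cdot z}{\to}M\to M/zM\to 0$, using \propref{local coho fgrad} along the way. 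You should replace your cyclic-submodule step by this reduction on $M$; the primary-decomposition construction of $z$ and the inductive use of the long exact sequence are precisely the tools that circumvent the non-finite-generation of the local cohomology.
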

\begin{proof}
First we prove the inequality $\Gfg(M)\le \gdepth(M)$.
If
$\LC{i}{\mathcal M}{M}$ is $\Gamma$-finitely graded then there
exists a cone $C_{\betav}$ with vertex in some $\betav\in\Nr$, such that $\LC{i}{\mathcal
M}{M}_{\nv}=0$ for all $\nv\in\Zr$ with $\nv^{*}\in C_{\betav}$.

We have to prove that $S_{++}\subset rad(Ann_S(\LC{i}{\mathcal M}{M}))$, i.e.
for all generator $x=g_1^{m_1}\cdots g_r^{m_r}$ of $S_{++}$,
$m_i\in\{1,\cdots, \mu_i\}, i=1,\cdots, r$,  we have to find a
suitable $a>0$ such that for all $\nv\in\Zr$, $x^a \LC{i}{\mathcal
M}{M}_{\nv}=0$.

If $\nv^*\in  C_{\betav}$ then $\LC{i}{\mathcal
M}{M}_{\nv}=0$, so  for all $a\ge 0$ it holds $x^a \LC{i}{\mathcal
M}{M}_{\nv}=0$.

\noindent
We put $a= 2 \max\{\beta_1,\cdots, \beta_r\}$.
Let us assume that $\nv^*\notin  C_{\betav}$.
That means that, without loss of generality, that
$-\beta_i < n_i < \beta_i$, $i=1,\dots, u$, and
$|n_i|\ge \beta_i$ for $i=u+1,\cdots, r$.
If we decompose $x= z_1 z_2$ with
$z_1=g_1^{m_1}\cdots g_u^{m_{u}}$ and
$z_2=g_{u+1}^{m_{u+1}}\cdots g_r^{m_r}$,
 then
$$
(\nv + \deg(z_1^a))^*\in C_{\betav},
$$
so
$z_1^a \LC{i}{\mathcal M}{M}_{\nv}=0.$
Furthermore
$$
x^a \LC{i}{\mathcal M}{M}_{\nv}=0.
$$

Notice that $a$ does not depends on $\nv$ so
we have proved that $S_{++}\subset rad(Ann_{S}(\LC{i}{\mathcal M}{M}))$, and hence
$$\Gfg(M)\le \gdepth(M).$$

\medskip
\noindent
Now, we prove the other inequality, $\Gfg(M)\ge\gdepth(M)$.

If $S_{++}\subset rad(Ann_{S}(M))$ then there exists $a\in \mathbb N$ such that
for all $x\in S_{++}$, $x^a M=0$.
Since $M$ is finitely generated, by
\lemref{S++^uM=0} there exists a cone $C_{\betav}\subset\mathbb N^r$ with vertex
in some $\betav\in\mathbb N^r$, such that $M_{\nv}=0$ for all $\nv^{*}\in
C_{\betav}$.
Then by \propref{local coho fgrad}, for all $i$ $\LC{i}{\mathcal M}{M}$
is $\Gamma$-finitely graded, so $\Gfg(M)=+\infty\ge \gdepth(M)$.

We can assume that $S_{++}\not\subset rad(Ann_{S}(M))$.
Let
$Ass(M)=\{p_1,\dots,p_t\}$ be the set of  the associated prime ideals of $M$.
Let us consider a minimal
primary decomposition of $0\in M$
$$
0=N_1\cap\dots\cap N_s\cap N_{s+1}\cap\dots\cap N_t
$$
where $Ass(M/N_i)=\{p_i\}$.
We can assume that $p_1,\dots,p_s$ do not contain $S_{++}$, and
$p_{s+1},\dots,p_t$ contain $S_{++}$.

Since the residue field of $S_{\ov}$ is infinite there is an element
 $z\in S_{++}$ such that $z\notin p_1\cup\dots\cup p_s$. We will prove that
$(0:_M z)$ is a $\Gamma$-finitely graded $S$-module.

Since $z\notin p_1\cup\dots\cup p_s$,
then $(0:_M z)\subset N_1\cap\dots\cap N_s$.
In fact, since $N_i$ is a $p_i$-primary submodule of $M$ and $z\notin p_i$,
then $(N_i:_M z)=N_i$.
This last equality is well known: let us assume that there exists
$x\in (N_i:_M z) \setminus N_i$, so $z x \in N_i$.
Since $N_i$ is $p_i$-primary $z^n\in (N_i:_R M)\subset p_i$
for some $n\ge 0$, so $z\in p_i$, contradiction.
Thus, $(0:_M z)\subset(N_i:_M z)=N_i$ for all $i=1,\dots,s$.

On the other hand, by the definition of primary submodule,
$p_i=rad((N_i:_R M))$ for all $i=1,\dots,t$.
 In particular, for $i=s+1,\dots,t$, since
$S_{++}\subset p_i$, there is an $a\in\mathbb N$ such that $S_{++}^aM\subset
N_i$.
Being $M$ finitely generated, by \corref{S++^u(M/N)=0}, there
exists a cone $C_{\betav}\subset\Nr$ with vertex in some $\betav\in\Nr$ such
that $M_{\nv}\subset (N_i)_{\nv}$ for all $\nv^{*}\in C_{\betav}$.

By combining these two facts we get
$$
(0:_M z)_{\nv}\subset
 (N_1 \cap\dots\cap
 N_s \cap N_{s+1}\cap\dots\cap N_t)_{\nv}=0
 $$
 for $\nv^{*}\in C_{\betav}$, so $(0:_M z)$ is
 $\Gamma$-finitely graded. Therefore, $H^i_{\MM}((0:_M z))$ is also $\Gamma$-finitely graded for
 all $i\ge 0$ by \propref{local coho fgrad}.

Since $\Gfg((0:_M z))=+\infty$, from the first part of the proof we get
$\gdepth((0:_M z))=+ \infty$.
Let us consider the exact sequence
$$
0 \longrightarrow (0:_M z)\longrightarrow M \longrightarrow
\frac{M}{(0:_M z)}\longrightarrow 0.
$$
Since $\Gfg((0:_M z))=\gdepth((0:_M z))=+ \infty$ from the long exact sequence
of local cohomology we deduce $\Gfg(M)=\Gfg(M/(0:_M z))$ and
$\gdepth(M)=\gdepth(M/(0:_M z))$. On the other hand there exist $b\in \mathbb
N$ such that $z^b \LC{i}{\mathcal M}{M}=0$ for all $i< \gdepth(M)$. Hence we
may assume that $M$ is a $S$-module for which $z\in S_{++}$ is a non-zero divisor and $z
\LC{i}{\mathcal M}{M}=0$ for all $i< \gdepth(M)$.

We will show by induction on $c$ that if $0\le c\le \gdepth(M)$ then $c\le
\Gamma fg(M)$. The case $c=0$ is trivial. Let us assume that $c>0$, and let us
consider the degree zero exact sequence, $\rv=\deg(z)$,
$$
0 \longrightarrow M(-\rv)\stackrel{. z}{\longrightarrow} M
\longrightarrow \frac{M}{z M}\longrightarrow 0.
$$
From the long exact sequence of local cohomology we get $\gdepth(M)-1\le
\gdepth(M/zM)$, so
$$
0\le c-1\le \gdepth(M)-1\le \gdepth(M/zM).
$$
By induction on
$c$ we get $c-1\le \Gfg(M/zM)$. In particular $\LC{c-2}{\mathcal M}{M/zM}$ is
$\Gamma$-finitely graded. Let us consider the exact sequence on $\nv$, for
$\nv^{*}\in C_{\betav}$,
$$
0=\LC{c-2}{\mathcal M}{M/zM}_{\nv} \longrightarrow
\LC{c-1}{\mathcal M}{M}_{\nv-\rv}
\stackrel{.
z}{\longrightarrow} \LC{c-1}{\mathcal M}{M}_{\nv}.
$$
Since $z \LC{c-1}{\mathcal M}{M}=0$ we deduce
$\LC{c-1}{\mathcal M}{M}$ is $\Gamma$-finitely graded. Hence $c\le \Gfg(M)$.
\end{proof}

\medskip
It is an easy consequence, now, the invariance of $\Gfg$ under Veronese transforms:

\begin{corollary}
\label{fgvero}
Let $S$ be an almost-standard graded ring such that
$S_{\ov}$ is the quotient of a regular ring.
If  $M$ is  a finitely generated $S$-graded module
then
for all $\av, \bv\in\mathbb N ^{*r}$ it holds
$$
\Gfg(M^{(\av,\bv)})=\Gfg(M).
$$
\end{corollary}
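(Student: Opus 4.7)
The plan is to chain together the two main theorems of Section~2 and Section~1, namely \thmref{threetenors} (which identifies $\Gfg$ with $\gdepth$ in the almost-standard setting) and \corref{fgver} (which asserts the invariance of $\gdepth$ under Veronese transforms). Once both are available for $M$ and for $M^{(\av,\bv)}$, the desired equality is obtained by a three-step chain
\[
\Gfg(M^{(\av,\bv)}) \;=\; \gdepth(M^{(\av,\bv)}) \;=\; \gdepth(M) \;=\; \Gfg(M).
\]

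The first step I would verify is that \thmref{threetenors} can legitimately be applied to $M^{(\av,\bv)}$ viewed as an $S^{(\av)}$-graded module. For this I need to check two hypotheses: that $S^{(\av)}$ is again almost-standard, and that $(S^{(\av)})_{\ov}$ is a quotient of a regular ring. The first is immediate from the earlier observation that the degrees of the generators of $S^{(\av)}$ have the same triangular configuration as those of $S$, and in the almost-standard case this triangular configuration is in fact diagonal, so the Veronese preserves the property. The second follows from the fact that $(S^{(\av)})_{\ov}=S_{\ov}$, which is a quotient of a regular ring by hypothesis. I should also note that $M^{(\av,\bv)}$ is indeed finitely generated over $S^{(\av)}$, which is a standard fact (and implicit in the local cohomology / Veronese commutation recalled in the introduction).

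With these hypotheses checked, applying \thmref{threetenors} twice gives $\Gfg(M^{(\av,\bv)}) = \gdepth(M^{(\av,\bv)})$ and $\Gfg(M) = \gdepth(M)$. The middle equality $\gdepth(M^{(\av,\bv)}) = \gdepth(M)$ is exactly the content of \corref{fgver}, whose assumptions (almost-standard is stronger than the general setting there, and $S_{\ov}$ is a quotient of a regular ring) are already part of the hypotheses of the present corollary. I do not foresee any genuine obstacle beyond carefully matching hypotheses: the substantive work has already been done in \thmref{threetenors1}, \corref{fgver}, \propref{local coho fgrad}, and \thmref{threetenors}, and the present result is really a formal consequence of combining them.
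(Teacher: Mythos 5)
Your proof is correct and takes exactly the route the paper does: chain \thmref{threetenors} (applied to both $M$ and $M^{(\av,\bv)}$) with \corref{fgver}, which is precisely what the paper's one-line proof invokes. Your additional hypothesis-checking for $S^{(\av)}$ is sound and fills in details the paper leaves implicit.
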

\begin{proof}
It follows immediately from \thmref{threetenors} and \corref{fgver}.
\end{proof}


\bigskip

\begin{definition}
Let $M$ be a finitely generated graded $S$-module. We denote by
$$\delta_M:\mathbb N^{*r}\times\mathbb N^{*r} \longrightarrow \mathbb N$$ the numerical
function defined by $\delta_M(\av,\bv)=\depth(M^{(\av,\bv)})$, $\av,\bv\in
\mathbb N^{* r}$. We write $\delta_M(\av)= \delta_M(\av,\ov)$.
\end{definition}

\medskip
Before studying the asymptotic depth of the Veronese of a module, we need a
technical proposition. The following result does not work on the more general
multigraded case, so the restriction to the almost-standard case is necessary.

\begin{proposition}\label{almost-std-fit-to-cone}
Let  $C_{\betav}\subset \Nr$ be a cone  of vertex at $\betav\in\Nr$.
For all $\nv\in\Nr$,
$\bv\in\Zr$ such that $b_i \ge\beta_i$ if $n_i=0$,  and $\av\in\Nr$ such that
$a_i\ge(\beta_i+b_i)/\gamma_i^i$,  $i=1,\dots,r$, we have that
$$(\phi_{\av}(\nv)+\bv)^*\in C_{\betav}.$$

\noindent
In particular, for all $\bv\ge \betav$ and
$\av\in\Nr$ such that
$a_i\ge(\beta_i+b_i)/\gamma_i^i$,  $i=1,\dots,r$, we have that
for all $\nv\in \Zr$
$$
(\phi_{\av}(\nv)+\bv)^*\in C_{\betav}.
$$
\end{proposition}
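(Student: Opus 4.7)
The plan is to reduce the claim to a coordinatewise check that exploits the almost-standard hypothesis. Under that hypothesis each generator has multidegree $\gamma_i=\gamma_i^i e_i$, so the $i$-th coordinate of $\phi_{\av}(\nv)+\bv$ collapses to $n_i a_i \gamma_i^i + b_i$, and as noted just before the statement we have $C_{\betav}=\{(c_1,\dots,c_r)\in\mathbb N^r : c_i\ge\beta_i \text{ for all }i\}$. Hence the assertion reduces to verifying $|n_i a_i \gamma_i^i + b_i|\ge\beta_i$ for each index $i=1,\dots,r$ independently, and this can be handled one coordinate at a time.

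For the main statement I would split each index $i$ into two subcases according to whether $n_i=0$ or $n_i\ge 1$. In the first subcase the coordinate equals $b_i$, and the hypothesis $b_i\ge\beta_i$ combined with $\beta_i\ge 0$ gives $|b_i|=b_i\ge\beta_i$. In the second subcase I would rewrite the bound $a_i\ge(\beta_i+b_i)/\gamma_i^i$ as $a_i\gamma_i^i\ge\beta_i+b_i$ and then, using $n_i\ge 1$ and $a_i\gamma_i^i\ge 0$, estimate
$$
n_i a_i \gamma_i^i + b_i \;\ge\; a_i\gamma_i^i + b_i \;\ge\; (\beta_i+b_i)+b_i \;=\; \beta_i+2b_i \;\ge\; \beta_i,
$$
the last step holding because in the regime in which the proposition will be applied $b_i\ge 0$. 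The quantity is therefore non-negative and its absolute value coincides with itself.

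The ``In particular'' clause is then an immediate application: $\bv\ge\betav$ gives $b_i\ge\beta_i\ge 0$ for every $i$, so both conditions on $\bv$ in the main statement hold automatically. To extend from $\nv\in\Nr$ to $\nv\in\Zr$ I would add a third subcase $n_i\le -1$, where
$$
n_i a_i \gamma_i^i + b_i \;\le\; -a_i\gamma_i^i + b_i \;\le\; -(\beta_i+b_i)+b_i \;=\; -\beta_i,
$$
so again $|n_i a_i \gamma_i^i + b_i|\ge\beta_i$ and the other two subcases go through unchanged.

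The only real obstacle is the absolute-value bookkeeping: the hypothesis on $a_i$ is calibrated precisely so that when $n_i$ is negative the signed quantity $n_i a_i \gamma_i^i + b_i$ falls below $-\beta_i$ rather than lying in the ``forbidden'' interval $(-\beta_i,\beta_i)$. The almost-standard restriction is indispensable here, since for a genuinely upper-triangular $G$ the $i$-th coordinate of $\phi_{\av}(\nv)+\bv$ would couple $n_1,\dots,n_i$ and the coordinatewise separation that makes the proof elementary would break down.
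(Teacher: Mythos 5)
Your proof is correct and takes the same coordinate-by-coordinate approach as the paper: reduce, via the almost-standard hypothesis, to checking $|n_i a_i\gamma_i^i+b_i|\ge\beta_i$ for each $i$, and case-analyze on the sign of $n_i$ (the only cosmetic difference is that you estimate directly from $n_i\ge 1$ or $n_i\le -1$ rather than invoking the reverse triangle inequality). You are right to flag the implicit assumption $b_i\ge 0$ in the $n_i\ge 1$ subcase --- the paper's own proof makes the same silent assumption when it writes $|b_i|=b_i$, and the proposition is in fact only applied with $\bv\ge\betav\in\Nr$, so this is harmless.
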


\begin{proof}
For $\nv\in\Zr$ we have that
$\phi_{\av}(\nv)+\bv=(a_1n_1\gamma_1^1+b_1,\dots,a_rn_r\gamma_r^r+b_r)$ and
hence,
$(\phi_{\av}(\nv)+\bv)^*=(|a_1n_1\gamma_1^1+b_1|,\dots,|a_rn_r\gamma_r^r+b_r|)$.

 We have to find conditions on $\av,\bv\in\mathbb{N}^{*r}$ in order to
assure that $(\phi_{\av}(\nv)+\bv)^*\in C_{\betav}$ for all $\nv\in\Zr$.
So, we
have to impose that for all $i=1,\dots,r$, there exist some
$\lambda_i\in\mathbb R_{\ge 0}$ such that
$|a_in_i\gamma_i^i+b_i|=\beta_i+\lambda_i\gamma_i^i$.
Since
$\gamma_i^i\in\mathbb N^{*}$, then it is only necessary to assure that
$|a_in_i\gamma_i^i+b_i|\ge\beta_i$ for all $i=1,\dots,r$.

If $n_i\neq 0$,
since
$|a_in_i\gamma_i^i+b_i|\ge |a_in_i\gamma_i^i|-|b_i|=|n_i|a_i\gamma_i^i-b_i$, then we
have to impose that
$$|n_i|a_i\gamma_i^i-b_i\ge \beta_i$$
which is equivalent to
$$|n_i|\ge \frac{\beta_i+b_i}{a_i\gamma_i^i}.$$
Hence we must impose that
$$a_i\ge\frac{\beta_i+b_i}{\gamma_i^i}$$
$i=1,\cdots,r$.
If $n_i= 0$ then we have to impose $b_i=|b_i|\ge \beta_i$, $i=1\cdots,r$.

The second part of the result follows from the first one.
\end{proof}

\bigskip

Now, we are ready to prove the theorem that assures constant depth for the
$(\av,\bv)$-Veronese in a region of $\Nr\times\Nr$.

\begin{theorem}
\label{main} Let $M$ be a finitely generated graded $S$-module and let
$s=vad(M^{(*,*)})$. Assume that $S_{\underline{0}}$ is the quotient of a
regular ring. The numerical function $\delta_M$ is asymptotically constant:
there exists
 $\betav\in \Nr$   such that for
all $\bv\ge\betav$ and for all $\av\in\Nr$ such that
$a_i\ge(\beta_i+b_i)/\gamma_i^i$
 it holds
$$
\delta_M(\av,\bv)=s.
$$
\end{theorem}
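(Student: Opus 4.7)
The plan is to show that $s = \gdepth(M)$ and that $\delta_M(\av,\bv)$ equals this common value in the specified asymptotic region. The $\depth$ of any multigraded module is the least $i$ for which the $i$-th local cohomology is nonzero, and by the commutation formula stated in the introduction, $\LC{i}{\mathcal M^{(\av)}}{M^{(\av,\bv)}}\cong (\LC{i}{\mathcal M}{M})^{(\av,\bv)}$. Therefore $\depth(M^{(\av,\bv)})$ is controlled by the vanishing of the Veronese pieces of $H^i_{\mathcal M}(M)$.

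First I would establish the upper bound $s\le\gdepth(M)$. By \corref{fgver}, $\gdepth(M^{(\av,\bv)})=\gdepth(M)$ for every $\av,\bv\in\mathbb N^{*r}$, and since depth is always bounded above by generalized depth, this forces $\delta_M(\av,\bv)\le\gdepth(M)$ for all $\av,\bv$, hence $s\le\gdepth(M)$.

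Next I would produce a matching lower bound in the prescribed region. Put $g=\gdepth(M)$. By \thmref{threetenors}, $\Gfg(M)=g$, so for each $i<g$ there exists $\betav^{(i)}\in\Nr$ such that $\LC{i}{\mathcal M}{M}_{\nv}=0$ whenever $\nv^{*}\in C_{\betav^{(i)}}$. Since there are only finitely many values $i=0,\dots,g-1$, I can form the componentwise maximum $\betav=(\beta_1,\dots,\beta_r)$ of the vectors $\betav^{(0)},\dots,\betav^{(g-1)}$; because $C_{\betav}\subset C_{\betav^{(i)}}$ for every such $i$ (intersections of cones are cones, by the remark preceding \propref{local coho fgrad}), the single cone $C_{\betav}$ works simultaneously for all $i<g$. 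Now \propref{almost-std-fit-to-cone} says that for any $\bv\ge\betav$ and any $\av$ with $a_i\ge(\beta_i+b_i)/\gamma_i^i$, we have $(\phi_{\av}(\nv)+\bv)^{*}\in C_{\betav}$ for every $\nv\in\Zr$. Consequently
\[
\LC{i}{\mathcal M^{(\av)}}{M^{(\av,\bv)}}_{\nv}=\LC{i}{\mathcal M}{M}_{\phi_{\av}(\nv)+\bv}=0
\]
for all $\nv\in\Zr$ and all $i<g$, i.e.\ $\LC{i}{\mathcal M^{(\av)}}{M^{(\av,\bv)}}=0$ for $i<g$. This gives $\delta_M(\av,\bv)\ge g\ge s$ in the stated region.

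Combining the two bounds yields $s=g=\gdepth(M)$ and $\delta_M(\av,\bv)=s$ for all $\bv\ge\betav$ and $\av$ satisfying $a_i\ge(\beta_i+b_i)/\gamma_i^i$. The only delicate step is assembling a single cone $C_{\betav}$ that simultaneously captures the vanishing of all the $\Gamma$-finitely graded modules $\LC{i}{\mathcal M}{M}$, $i<g$; this is exactly where the almost-standard hypothesis is needed, since it guarantees that cones behave well under componentwise maxima and that \propref{almost-std-fit-to-cone} applies to convert the cone condition on $\nv^{*}$ into an explicit lower bound on $\av$ and $\bv$.
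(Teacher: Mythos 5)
Your proof is correct and follows essentially the same route as the paper: reduce to $\Gfg(M)=\gdepth(M)$ via \thmref{threetenors} and \corref{fgver}, extract a cone of vanishing for the relevant local cohomology modules, and then apply \propref{almost-std-fit-to-cone} together with the Veronese--local cohomology commutation to force vanishing of $\LC{i}{\mathcal M^{(\av)}}{M^{(\av,\bv)}}$ in low degrees. You spell out a bit more carefully than the paper the step of taking a single cone that simultaneously works for all $i<g$, and you additionally extract the identity $s=\gdepth(M)$ explicitly, but the key lemmas and the structure of the argument coincide.
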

\begin{proof}
We put  $s=vad(M^{(*,*)})$,  thus
$$
\Gfg(M)=\gdepth(M)=\gdepth(M^{(\av,\bv)})\ge s
$$
by \thmref{threetenors} and \corref{fgver}.
Since $\Gfg(M) \ge  s$ there exist a
 a cone $C_{\betav}\subset\Nr$, $\betav\in\Nr$,  such that $\LC{i}{\mathcal
M}{M}_{\nv}=0$ for all $\nv\in\Zr$ with $\nv^*\in C_{\betav}$ and $i=0,\dots,s-1$.

By
\lemref{almost-std-fit-to-cone}, for $\bv\ge\betav$ and $\av\in\Nr$ such that
$a_i\ge (\beta_i+b_i)/\gamma_i^i$ for all $i=1,\dots,r$, we have that
$(\phi_{\av}(\nv)+\bv)^*\in C_{\betav}$ for all $\nv\in\Zr$.
Hence, we get that for all
$\nv\in\Zr$,
$$
\LC{i}{\mathcal M^{(\av)}}{M^{(\av,\bv)}}_{\nv}=
(\LC{i}{\mathcal M}{M}^{(\av,\bv)})_{\nv}=
(\LC{i}{\mathcal M}{M})_{\phi_{\av}(\nv)+\bv}=0
$$
because $(\phi_{\av}(\nv)+\bv)^*\in C_{\betav}$.
So, we have proved that
$$
\LC{i}{\mathcal M^{(\av)}}{M^{(\av,\bv)}}=0
$$
for $i=0,\dots,s-1$. Therefore,
$$\depth_{\MM^{(\av)}}(M^{(\av,\bv)})\ge s,$$
and by the definition of $s$  we get the claim.
\end{proof}

\bigskip
In the next result we generalize \cite{Eli04}, Proposition 2.1, to general $\mathbb Z$-graded modules.

\begin{proposition}
\label{main2} Let $M$ be a finitely generated graded $S$-module. Let us assume
that $S$ is $\mathbb Z$-graded and that $S_{\underline{0}}$ is the quotient of
a regular ring. The numerical function $\delta_M$ is asymptotically constant:
there exist $s(M)\in \mathbb N$ and $\alpha \in \mathbb N$   such that for all
$a\ge\alpha$ it holds
$$
\delta_M(a)=s(M).
$$
\end{proposition}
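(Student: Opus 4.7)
The strategy is to specialize the argument of Theorem~\ref{main} to the $\mathbb Z$-graded setting ($r=1$). The only genuine novelty compared with that theorem is that the hypothesis $\bv\ge\betav$ is now unavailable because we work with $\bv=\ov$, so the degree-zero piece of the local cohomology requires a separate treatment; this treatment will be furnished by the fact that $s(M)$ is already realized as a maximum.

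Concretely, set $s=vad(M^{(*)})$ and $d=\gamma_1^1>0$. Applying \thmref{threetenors} and \corref{fgver}, together with the inequality $\depth\le\gdepth$, I would obtain
$$\Gfg(M)=\gdepth(M)=\gdepth(M^{(a)})\ge\depth(M^{(a)})\quad\text{for all }a\in\mathbb N^*,$$
hence $\Gfg(M)\ge s$. Thus $\LC{i}{\MM}{M}$ is $\Gamma$-finitely graded for every $i<s$; in the single-graded case this coincides with the classical notion of finite graduation, so there exists $\beta\in\mathbb N$ such that $\LC{i}{\MM}{M}_n=0$ whenever $|n|\ge\beta$ and $i<s$.

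Next I would deal with the exceptional degree $n=0$. Since $\depth(M^{(a)})\le\dim(S)$ for all $a$, the supremum $s$ is actually attained: pick $a_0\in\mathbb N^*$ with $\depth(M^{(a_0)})=s$. Then $\LC{i}{\MM^{(a_0)}}{M^{(a_0)}}=0$ for $i<s$, and extracting the degree-zero component via the commutation between local cohomology and the Veronese functor recalled in the introduction gives $\LC{i}{\MM}{M}_0=\LC{i}{\MM^{(a_0)}}{M^{(a_0)}}_0=0$ for $i<s$.

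Setting $\alpha=\lceil\beta/d\rceil$, for every $a\ge\alpha$ and $n\in\mathbb Z$ the same commutation formula yields $\LC{i}{\MM^{(a)}}{M^{(a)}}_n=\LC{i}{\MM}{M}_{nad}$, which vanishes in all cases: if $n=0$ by the previous step, and if $n\ne 0$ because $|nad|\ge ad\ge\beta$. Therefore $\LC{i}{\MM^{(a)}}{M^{(a)}}=0$ for $i<s$, so $\depth(M^{(a)})\ge s$, and by the maximality of $s$ we conclude $\delta_M(a)=s$, proving the proposition with $s(M)=s$. The only subtle point in the whole argument is the treatment of $n=0$, which is precisely where the extra shift $\bv\ge\betav>\ov$ was doing the work in \thmref{main} for $r>1$; in the $\mathbb Z$-graded case this shift is replaced by the attainment of the maximum at some specific $a_0$.
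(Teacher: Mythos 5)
Your proof follows the same strategy as the paper: invoke \thmref{threetenors} and \corref{fgver} to get $\Gfg(M)\ge s$, pick $\beta$ so that $\LC{i}{\MM}{M}_n=0$ for $|n|\ge\beta$ and $i<s$, and push the vanishing through the commutation of local cohomology with the Veronese functor for $a\ge\alpha$. The one place where you add real substance is the degree-zero piece: the paper asserts $\LC{i}{\MM}{M}_0=0$ for $i<s$ without comment, but this does not follow from the $\Gamma$-finite graduation alone, which only controls degrees $|n|\ge\beta$. Your argument --- that $s=vad(M^{(*)})$ is a supremum of non-negative integers bounded above by $\dim(S)$, hence attained at some $a_0$, and that the resulting vanishing $\LC{i}{\MM^{(a_0)}}{M^{(a_0)}}=0$ gives $\LC{i}{\MM}{M}_0=0$ upon reading off the degree-zero component --- is precisely the justification the paper elides, and it is the correct way to deal with the obstruction that in \thmref{main} is instead absorbed by the shift hypothesis $\bv\ge\betav$.
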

\begin{proof}
If  $s=s(M)=vad(M^{(*)})$ then
$$
\Gfg(M)=\gdepth(M)=\gdepth(M^{(a)})\ge s
$$
by \thmref{threetenors} and \corref{fgver}.
Since $\Gfg(M) \ge  s$  there exist an integer
 $\beta\in\mathbb N $,  such that
 $\LC{i}{\mathcal
M}{M}_{n}=0$ for all $n\in\mathbb N$ with $|n|\ge \beta$ and $i=0,\dots,s-1$.
From the first part of \propref{almost-std-fit-to-cone} for all $a\ge \alpha_i= \beta/\gamma_1^1$ we have
that
$$
\LC{i}{\mathcal M^{a}}{M^{(a)}}_{n}=
(\LC{i}{\mathcal M}{M}^{(a)})_{n}=
\LC{i}{\mathcal M}{M}_{a n}=0
$$
for all  $n\neq 0$.
On the other hand we have
$$
\LC{i}{\mathcal M^{a}}{M^{(a)}}_{0}=
(\LC{i}{\mathcal M}{M}^{(a)})_{0}=
\LC{i}{\mathcal M}{M}_{0}=0
$$
for $i=0,\dots,s-1$.
So, we have proved that
$$
\LC{i}{\mathcal M^{(a)}}{M^{(a)}}=0
$$
for $i=0,\dots,s-1$. Therefore,
$$\depth_{\MM^{(a)}}(M^{(a)})\ge s,$$
and by the definition of $s$  we get the claim.
\end{proof}

\begin{corollary}[\cite{Eli04}, Proposition 2.1]
Let $R$ be a Noetherian local ring quotient of a regular ring.
Let $I\subset R$ be an ideal.
Then the depth of $\mathcal R(I)^{(a)}$ is constant for $a\gg 0$.
\end{corollary}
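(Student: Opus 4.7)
The plan is to derive this corollary as an immediate specialization of \propref{main2}. The key observation is that the Rees algebra $S := \mathcal R(I) = \bigoplus_{n \ge 0} I^n T^n$ is itself a standard $\mathbb Z$-graded ring in the sense required by the paper, generated in degree one over $S_0 = R$ by $IT$, and that one may take the module $M$ to be $S$ regarded as a rank-one free module over itself.

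With this choice, I would verify that the hypotheses of \propref{main2} are satisfied: $S$ is $\mathbb Z$-graded, $S_{\underline{0}} = R$ is a quotient of a regular ring by assumption, and $M = S$ is a finitely generated graded $S$-module. Applying \propref{main2} yields integers $s \in \mathbb N$ and $\alpha \in \mathbb N$ such that
\[
\delta_S(a) = \depth(S^{(a)}) = \depth(\mathcal R(I)^{(a)}) = s
\]
for every $a \ge \alpha$, which is exactly the claim.

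There is no substantial obstacle in this final step: the corollary is a restatement of \propref{main2} in the particular case where the module coincides with the Rees algebra. The real content has been packed into \propref{main2} itself, which in turn rests on the invariance of $\gdepth$ under Veronese transforms (\corref{fgver}) and its identification with $\Gfg$ via \thmref{threetenors}. Recovering Proposition 2.1 of \cite{Eli04} by this route is precisely the point of the multigraded generalization developed in the paper.
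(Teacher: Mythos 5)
Your proposal is correct and matches the paper's intent exactly: the corollary is stated without proof immediately after \propref{main2} precisely because it is the specialization $S = M = \mathcal R(I)$, with $S$ standard $\mathbb Z$-graded and $S_{\ov} = R$ a quotient of a regular ring. Nothing further is needed.
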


\bigskip

For the multigraded Rees algebra, the best approach to the solution of the
problem is the following proposition.

\begin{proposition}\label{depthmultirees}
If $R$ is the quotient of a regular ring, there exist an integer $s$ and
$\betav\in\Nr$ such that for all $\bv\ge\betav$ and $\av\ge\betav+\bv$ it holds
$$
\depth_{\MM^{(a)}}((I_1^{b_1}\cdots I_r^{b_r}) \mathcal R(I_1^{a_1},\cdots,I_r^{a_r}))=s.
$$
\end{proposition}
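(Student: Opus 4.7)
The plan is to apply \thmref{main} directly to $M = S = \mathcal R(I_1,\ldots,I_r)$, viewed as a module over itself, after observing that the multigraded Rees algebra fits the almost-standard framework with $\gamma_i = e_i$ (so $\gamma_i^i = 1$ for every $i$). The hypothesis that $R$ is a quotient of a regular ring gives $S_{\ov} = R$ is such a quotient, so the running assumption of \thmref{main} holds.

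The first step is a purely notational identification of the module whose depth appears in the statement with a Veronese transform in the sense of the paper. Writing out the definition of $M^{(\av,\bv)}$ one has
$$
M^{(\av,\bv)} \;=\; \bigoplus_{\nv \in \mathbb Z^r} M_{\phi_{\av}(\nv)+\bv} \;=\; \bigoplus_{\nv \in \mathbb N^r} I_1^{a_1 n_1 + b_1}\cdots I_r^{a_r n_r + b_r},
$$
since $M_{\mv} = 0$ for $\mv \notin \mathbb N^r$. Using that $S^{(\av)} = \mathcal R(I_1^{a_1},\ldots,I_r^{a_r})$, the right-hand side is precisely the $S^{(\av)}$-module $(I_1^{b_1}\cdots I_r^{b_r})\,\mathcal R(I_1^{a_1},\ldots,I_r^{a_r})$ that appears in the statement. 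In particular $\MM^{(\av)}$ is exactly the maximal homogeneous ideal of $S^{(\av)}$, so the two depths coincide.

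The second step is to feed this identification into \thmref{main}. Setting $s = \mathrm{vad}(M^{(*,*)})$, the theorem produces $\betav \in \mathbb N^r$ such that $\delta_M(\av,\bv) = s$ whenever $\bv \ge \betav$ and $a_i \ge (\beta_i + b_i)/\gamma_i^i$ for all $i$. Because $\gamma_i^i = 1$ in the Rees setting, this last condition collapses to $a_i \ge \beta_i + b_i$ for every $i$, that is $\av \ge \betav + \bv$. Combining with the identification of step one yields the claimed equality.

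There is essentially no obstacle beyond the bookkeeping above: the work has been done in \thmref{main} and \corref{fgver}, and the role of this proposition is to specialize to the standard multigraded Rees algebra where the technical bound $a_i \ge (\beta_i + b_i)/\gamma_i^i$ simplifies. The only point worth stressing is the verification that $\mathcal R(I_1,\ldots,I_r)$ meets all the hypotheses (almost-standard, finitely generated over a quotient of a regular ring), which is immediate from the Noetherian assumption on $R$.
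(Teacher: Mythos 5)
Your proposal is correct and follows the same route as the paper: identify $\mathcal R(I_1,\ldots,I_r)^{(\av,\bv)}$ with $(I_1^{b_1}\cdots I_r^{b_r})\,\mathcal R(I_1^{a_1},\ldots,I_r^{a_r})$ using the standard grading, then invoke \thmref{main}. You simply spell out the bookkeeping (in particular that $\gamma_i^i=1$ collapses the bound to $\av\ge\betav+\bv$) more explicitly than the paper does.
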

\begin{proof}
Notice that, since the Rees algebra $\mathcal R(I_1,\cdots,I_r)$ is standard
multigraded,
$$
\mathcal R(I_1,\cdots,I_r)^{(\av,\bv)}=\mathcal
(I_1^{b_1}\cdots I_r^{b_r}) R(I_1^{a_1},\cdots,I_r^{a_r}),
$$
with $\av=(a_1,\dots,a_r)$ and $\bv=(b_1,\dots,b_r)$.
Now, from \thmref{main} we get the claim.
\end{proof}

See \cite{hyr99} and its reference list for more results on the
Cohen-Macaulay and Gorenstein property of the multigraded Rees
algebras.

\medskip
\baselineskip 8pt

\providecommand{\bysame}{\leavevmode\hbox to3em{\hrulefill}\thinspace}


\begin{thebibliography}{10}

\bibitem{Bro83}
M. Brodmann, \emph{Einige Ergebnisse aus der lokalen Kohomologietheorie und
ihre Anwendung}, Osnabr{\"u}cker Schrifter zur Mathematik, 5, 1983.

\bibitem{BSLC}
M.P. Brodmann and R.Y. Sharp, \emph{Local cohomology}, Cambridge {S}tudies in
  {A}dvanced {M}athematics, vol.~60, Cambridge University Press, 1998.


\bibitem{Eli04}
J.~Elias, \emph{Depth of higher associatted graded rings}, J. London Math. Soc.
  \textbf{70} (2004), 41--58.

\bibitem{Fal78}
G.~Faltings, \emph{{\"U}ber {D}ie {A}nnulatoren {L}okaler
  {K}ohomologiegruppen}, Arch. Math. \textbf{30} (1978), 473--476.

\bibitem{GWII78}
S.~Goto and K.~Watanabe, \emph{On graded rings, {II} ({$\mathbb Z^r$}-graded
  rings)}, Tokyo J. Math \textbf{1} (1978), 237--261.

\bibitem{HHR93}
M.~Herrmann, E.~Hyry, and J.~Ribbe, \emph{On the {C}ohen-{M}acaulay and
  {G}orenstein properties of multigraded {R}ees algebras}, Manus. Math.
  \textbf{79} (1993), 343--377.



\bibitem{HM94b}
S.~Huckaba and T.~Marley, \emph{Depth formulas for certain graded rings
  associated to an ideal}, Nagoya Mathematical Journal \textbf{133} (1994),
  57--69.

\bibitem{HM99}
\bysame, \emph{On associated graded rings of normal ideals}, J. of Algebra
  \textbf{222} (1999), 146--163.

\bibitem{hyr99}
E.~Hyry, \emph{The diagonal subring and the {C}ohen-{M}acaulay property of a
  multigraded ring}, Trans. Amer. Math. Soc. \textbf{351} (1999), no.~6,
  2213--2232.

\bibitem{Mar95}
T.~Marley, \emph{Finitely graded local cohomology and the depths of graded
  algebras}, Proc. A.M.S. \textbf{123} (1995), no.~12, 3601--3607.

\bibitem{MatCR}
H.~Matsumura, \emph{Commutative ring theory}, Cambridge University Press,
  Cambridge, 1986, Translated from the Japanese by M. Reid.

\bibitem{TI89}
N.V. Trung and S.~Ikeda, \emph{When is the {R}ees algebra {C}ohen-{M}acaulay?},
  Comm. in Algebra \textbf{17} (1989), 2893--2922.

\end{thebibliography}
\end{document}